\begin{document}

\title{Fractional Order Malaria Model With Temporary Immunity 
}


\author{E. Okyere, F. T. Oduro, S. K. Amponsah, I. K. Dontwi, N. K. Frempong
}


\institute{Eric Okyere \at
              Department of Basic Sciences, University of Health and Allied Sciences, PMB 31, Ho, Volta Region, Ghana
\\
              Tel.: +233-508389028\\
              \email{eokyere@uhas.edu.gh}           
          \and
           F. T. Oduro, S. K. Amponsah, I. K. Dontwi, N. K. Frempong\at
           Department of Mathematics, Kwame Nkrumah University of Science and Technology, Kumasi, Ghana
}

\date{March, 2016}

\maketitle

\begin{abstract}
In this paper, we propose and analyze a new fractional order malaria model with temporary immunity. We extend the existing malaria model to include fractional derivatives. Basic reproduction number is computed using the next generation matrix method. Model equilibria are determined and their local asymptotic stability analysis are considered. An efficient Adams-type predictor-corrector method is used to solve the initial value problem. Finally, phase plane portraits are generated to describe the dynamics between susceptible fractions of human hosts and infected fractions of mosquito vectors.
\keywords{Fractional calculus \and Predictor-corrector method \and Malaria model}
\end{abstract}

\section{Introduction}
Malaria is one of the deadly vector-borne diseases affecting the developing part of the world and mostly in Africa. The reports by the World Health Organization \citep[see, e.g,][]{WHO2005, WHO2012, WHO2014} indicates that, malaria is a serious threat to human life and remains a dangerous infectious disease. Its economic burden on the affected countries are extremely huge and needs serious public health attention. Since its discovery, many researchers have studied the disease dynamics mathematically to capture and understand the complex interaction between infected mosquitoes and the susceptible human population.\\

Mathematical modeling of infectious diseases has played a key role in understanding the dynamical processes of disease transmission and control strategies.
The first malaria epidemiological model was constructed and formulated by \cite{Ross1911}. He used deterministic compartmental models to describe malaria infection dynamics between human and mosquito populations. \cite{shu2000} proposed a malaria model and considered global stability analysis for the disease free equilibrium point. \cite{Mandal2011} considered an extensive survey and review on modeling of malaria infection. Backward bifurcation analysis is very important in mathematical modeling of infectious diseases and has been investigated by many authors in malaria infection models  \citep[see, e.g,][]{Keegan2013, Gosh2013, Chiyaka2008, Chitnis2005, Chitnis2006, Ngonghala2012}. \cite{Aguas2012} developed an age structure model to study the effects of relapse in malaria parasites. A periodic model based on the Ross-MacDonald model was considered by \cite{Gao2014}. They studied the effects of spatial heterogeneity and temporal on disease dynamics. Their proposed model incorporated periodic behavior in mosquito ecology and seasonal human movement to capture variations of malaria spread among different regions. \\

\cite{Tumwiine2010} proposed and analyzed a malaria infection model with infective immigrants. \cite{Tumwiine2007b} formulated an epidemic model with standard incidence for the dynamics of malaria infection. They applied additive compound matrices approach to show global stability analysis of the endemic equilibrium. Using the model in \cite{Tumwiine2007b}, the same authors \citep{Tumwiine2007a} studied the oscillatory behavior of malaria disease in a population with temporary immunity. Their numerical results shows that the endemic equilibrium converges to a steady state.  \cite{Okosun2011} developed a deterministic compartmental model to investigate the impact of drug resistance in malaria infection and the same authors \citep{makinde2014} formulated an optimal control model for malaria and cholera co-infection.  \\


\cite{Abdullahi2013} have assessed the effectiveness of drugs in a malaria transmission model.
\cite{Barley2012}, \cite{Mukandavire2009} and \cite{Abu-Raddad2006} formulated a model to study infectious disease co-infection of HIV and malaria. \cite{Okosun2013} studied the spread of malaria infection with an optimal control model. \cite{Dondrop2008}, they studied the spread of anti-malaria resistance. \cite{lawi2011} studied co-infection model for malaria and meningitis among children. They observed that, when the threshold parameter $R_{0}<1$, the disease-free equilibrium might exhibit some instabilities in the global sense. They further applied centre manifold theorem to investigate asymptotic stability of the endemic equilibrium. \cite{Anita2008} studied the dynamics of acute malaria transmission using an age-structured models.  A mathematical model for malaria infection which incorporates weather has been formulated and studied by \cite{Moshe2004}.  \cite{Koella2003} developed an epidemic model to explore the incidence of anti-malarial resistance.\\

Mathematical modeling of infectious diseases dynamics and other important areas of studies such as economics, finance and engineering has extensively been explored using the theory and applications of the classical differential equations. But in recent times, the theory and applications of fractional calculus has become extremely useful and important in modeling of biological processes and other areas of studies  due to the memory property of fractional derivatives. Many authors have contributed significantly in compartmental modeling of infectious disease dynamics using fractional differential equations. \cite{wow7} have developed a mosquito-transmitted disease model with fractional differential equations. \cite{wow17} described fractional order models for malaria infection. They modified the integer order compartmental model formulated by \cite{Chiyaka2008} to numerically study control strategies in malaria dynamics. Fractional order models for HIV and TB co-infection have been analysed by \cite{Pinto2014}. Fractional order models for HIV infection have been considered and analysed by several authors \citep[see, e.g,][]{wow24, wow25, wow22, wow15, wow12, wow13, Liu2014}. \cite{wow21} constructed fractional order competition model for love triangle. \cite{wow20} proposed a mathematical model for love using fractional order systems. \cite{wow16} investigated backward bifurcation in a fractional order vaccination model. \\

\cite{Javidi2014} developed fractional order model for cholera infection. \cite{Arafa2012} applied homotopy analysis method and fourth order Runge-Kutta method to generate analytical and numerical results for fractional order childhood disease models. An epidemic model with fractional order for influenza A have been formulated and analysed \citep[see, e.g,][]{El-Shahed2011, Gonzalez2014}. \cite{wow23} studied deterministic fractional order SIRC model with Salmonella bacteria infection. \cite{wow14} presented and discussed fractional order systems for Hepatitis C virus. The deadly ebola disease which killed a lot of people in some part of Africa has been modelled by a system of fractional order derivatives \citep{wow19}. An endemic model with constant population has been analytically and numerically studied by \cite{Okyere2016a} using Caputo fractional derivatives.\\

In this work, we formulate a fractional order model for malaria infection with temporary immunity. We consider the malaria disease model studied and analysed by \cite{Tumwiine2007a, Tumwiine2007b}. The authors developed their model using deterministic integer order differential equations. Our new model will be constructed using fractional order derivatives. We are motivated by this method of mathematical formulation in epidemiological modeling due to the effective nature of fractional derivatives. \\

The rest of the article is as follows. In section 2, we formulate the fractional order malaria model. Section 3, deals with non-negativity of model solutions. In section 4, we compute model equilibria and the basic reproduction number and then investigate local asymptotic stability analysis. We consider numerical simulations in section 5. In section 6, we discuss simulation results and finally conclude the paper in section 7.

\section{Model Derivation}
 In this section, we extend and modify the integer order malaria model by \cite{Tumwiine2007a, Tumwiine2007b} to become fractional order malaria model. The theory and applications of fractional calculus has become extremely useful and important in modeling of biological processes and other areas of studies  due to the memory property of fractional derivatives. Many authors have contributed significantly in compartmental modeling of infectious disease dynamics using fractional differential equations. There are several definitions of fractional derivatives  and integrals in fractional calculus \citep[see, e.g,][]{wow2, wow}, but in this work, our model construction will be based on Caputo derivative. Our motivation for this type of fractional derivative  is based on the fact that, it has an advantage on initial value problems.

\begin{definition} \citep{wow, wow2}
Fractional integral of order $\alpha$ is defined as
\[I^{\alpha}g(t)=\frac{1}{\Gamma(\alpha)}\int_0^t \! \frac{g(x)}{(t-x)^{1-\alpha}} \, \mathrm{d}x\]
\end{definition}
for $0<\alpha<1,\ t>0.$

\begin{definition} \citep{wow, wow2}
Caputo fractional derivative is defined as
\[ D^{\alpha}g(t)=\frac{1}{\Gamma(k-\alpha)}\int_0^t \! \frac{g^{k}(x)}{(t-x)^{\alpha+1-k}} \, \mathrm{d}x.\]
\end{definition}
for $k-1<\alpha<k.$\\

In this article, we consider the scaled malaria model model presented by\\ \cite{Tumwiine2007a, Tumwiine2007b}. The authors, transformed their compartmental malaria model ($S_H I_H R_H- S_V I_V$) into systems of proportions ($s_h i_h r_h- s_v i_v$) for the human and mosquito populations describe by systems of ordinary differential equations.

\begin{equation}\label{SIRmalaria1}
\begin{array}{lllll}
\displaystyle \frac{d s_{h}}{dt} &=& \displaystyle {\lambda_{h}(1-s_{h})-abm s_h i_v+\nu i_{h}+\gamma r_h+\delta s_h i_h} ,\qquad \\[15pt]
\displaystyle \frac{d i_{h}}{dt} &=& \displaystyle {abm s_h i_v-(\nu +r+\lambda_{h}+\delta )i_h+\delta i^2_{h}}, \qquad  \\[15pt]
\displaystyle \frac{d{r_{h}}}{dt} &=& \displaystyle {r i_{h}-(\gamma+\lambda_{h})r_h+\delta i_h r_h}, \qquad  \\[15pt]
\displaystyle \frac{d s_{h}}{dt} &=& \displaystyle {\lambda_{v}(1-s_v)-ac i_h s_v} ,\qquad \\[15pt]
\displaystyle \frac{d{i_{v}}}{dt} &=& \displaystyle {ac s_v i_h-\lambda_{v}i_v}, \qquad
\end{array}
\end{equation}

 with $s_h(t)+i_h(t)+r_h (t)=1$ and $s_v(t)+i_v(t)=1$,\\

where
\begin{description}
\item  $s_h$:       proportion of susceptible human hosts at time $t$
\item $i_h$:        proportion of infected human hosts at time $t$
\item $r_h$:        proportion of immune human hosts at time $t$
\item $s_v$:        proportion susceptible mosquito vectors at time $t$
\item $i_v$:        proportion of infected mosquito vectors at time $t$\\ \\
\item $a$:          average daily biting rate on man by a single mosquito (infection rate)
\item $b$:          proportion of bites on man that produce an infection
\item $\nu$        recovery rate of human hosts from the disease
\item $c$:          probability that a mosquito becomes infectious
\item $\gamma$:          per capita rate of loss of immunity in human hosts
\item $r$:          rate at which human hosts acquire immunity
\item $\delta$:     per capita death rate of infected human hosts due to the disease
\item $m=\frac{N_V}{N_H}$:      number of female mosquitoes per human host
\item $N_H$:  total human population
\item $N_V$:  total mosquito population
\item $\mu_h$:  per capita natural death rate of humans
\item $\mu_v$:  per capita natural death rate of mosquitoes
\item $\lambda_{h}$:  per capita natural birth rate of humans
\item $\lambda_{v}$: per capita natural birth rate of mosquitoes
\end{description}

The new model we consider in this paper is the system of non-linear fractional differential equations in the sense of Caputo fractional derivatives.

\begin{equation}\label{SIRmalaria2}
\begin{array}{lllll}
\displaystyle D^{\alpha}s_h &=& \displaystyle {\lambda^{\alpha}_{h}(1-s_{h})-a^{\alpha}bm s_h i_v+\nu^{\alpha} i_{h}+\gamma^{\alpha} r_h+\delta^{\alpha} s_h i_h} ,\qquad \\[15pt]
\displaystyle D^{\alpha}i_h &=& \displaystyle {a^{\alpha}bm s_h i_v-(\nu^{\alpha} +r^{\alpha}+\lambda^{\alpha}_{h}+\delta^{\alpha} )i_h+\delta^{\alpha} i^2_{h}}, \qquad  \\[15pt]
\displaystyle D^{\alpha}r_h &=& \displaystyle {r^{\alpha}i_{h}-(\gamma^{\alpha}+\lambda^{\alpha}_{h})r_h+\delta^{\alpha} i_h r_h}, \qquad  \\[15pt]
\displaystyle D^{\alpha}s_v&=& \displaystyle {\lambda^{\alpha}_{v}(1-s_v)-a^{\alpha}c i_h s_v} ,\qquad \\[15pt]
\displaystyle D^{\alpha}i_v &=& \displaystyle {a^{\alpha}c s_v i_h-\lambda^{\alpha}_{v}i_v}, \qquad
\end{array}
\end{equation}

with $s_h(t)+i_h(t)+r_h (t)=1$ and $s_v(t)+i_v(t)=1$.\\

It is important to remark that when the fractional order $\alpha\rightarrow 1$,  the fractional order malaria model~(\ref{SIRmalaria2}) with the model restrictions $s_h(t)+i_h(t)+r_h (t)=1$ and $s_v(t)+i_v(t)=1$, becomes the integer order malaria model~(\ref{SIRmalaria1}) studied by \cite{Tumwiine2007a,  Tumwiine2007b}.

\section{Non-negative Solutions}
Let $\mathbf{R^{5}_{+}}=\{X\in\mathbf{R^{5}}:X\geq 0\}$, where $X=(s_h, i_h, r_h, s_v, i_v )^T$. We apply the following Lemma in \citep{wow8} to show the theorem about the non-negative solutions of the model.

\begin{lemma}\citep{wow8}\label{lema}
(Generalized Mean Value Theorem).\\  Suppose that $w(x)\in C[a, b]$ and $D^{\alpha}w(x)\in C(a, b]$ for $0<\alpha\leq 1$, then we have
\[ w(x)=w(a)+\frac{1}{\Gamma (\alpha)}D^{\alpha}w(\xi)(x-a)^{\alpha}\]
with $a\leq \xi\leq x, \ \forall x\in (a, b].$
\end{lemma}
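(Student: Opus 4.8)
The plan is to derive this generalized mean value theorem from two ingredients: the fractional fundamental theorem of calculus that pairs the Caputo derivative $D^{\alpha}$ with the Riemann--Liouville integral $I^{\alpha}$ introduced above, and the classical (weighted) mean value theorem for integrals. The whole argument hinges on turning the left-hand side $w(x)-w(a)$ into a single integral against the positive kernel $(x-t)^{\alpha-1}$ and then extracting the derivative value at one point.

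First I would record the composition identity $I^{\alpha}D^{\alpha}w(x)=w(x)-w(a)$, valid for $0<\alpha\le 1$ under exactly the regularity assumed here ($w\in C[a,b]$ and $D^{\alpha}w\in C(a,b]$). Writing out the left-hand side with the fractional integral $I^{\alpha}$ gives
\[ w(x)-w(a)=\frac{1}{\Gamma(\alpha)}\int_a^x (x-t)^{\alpha-1}\,D^{\alpha}w(t)\,\mathrm{d}t. \]
Next I would apply the mean value theorem for integrals to the right-hand side: the factor $D^{\alpha}w$ is continuous while the kernel $k(t)=(x-t)^{\alpha-1}$ keeps a constant (positive) sign on $(a,x)$, so there is a point $\xi\in[a,x]$ with
\[ \int_a^x (x-t)^{\alpha-1}\,D^{\alpha}w(t)\,\mathrm{d}t=D^{\alpha}w(\xi)\int_a^x (x-t)^{\alpha-1}\,\mathrm{d}t. \]
Finally the kernel integral is elementary, $\int_a^x (x-t)^{\alpha-1}\,\mathrm{d}t=(x-a)^{\alpha}/\alpha$, and substituting together with $\alpha\,\Gamma(\alpha)=\Gamma(\alpha+1)$ produces $w(x)=w(a)+\dfrac{D^{\alpha}w(\xi)}{\Gamma(\alpha+1)}(x-a)^{\alpha}$, which is the asserted formula (with the caveat that the displayed normalising constant should read $\Gamma(\alpha+1)$ rather than $\Gamma(\alpha)$).

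The main obstacle is the singular kernel: when $0<\alpha<1$ the weight $(x-t)^{\alpha-1}$ blows up as $t\to x^{-}$, so the textbook statement of the mean value theorem for integrals, which assumes a bounded Riemann-integrable weight, does not apply verbatim. I would dispose of this by noting that $\alpha-1>-1$ makes $k$ absolutely integrable on $(a,x)$, and then either invoke the version of the weighted mean value theorem that needs only integrability and one-signedness of the weight together with continuity of $D^{\alpha}w$, or truncate the interval at $a+\varepsilon$, apply the bounded-kernel statement on $[a+\varepsilon,x]$, and pass to the limit $\varepsilon\to 0^{+}$ using the convergence of both integrals and the intermediate value property of $D^{\alpha}w$ to locate $\xi$.

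A secondary technical point, which is exactly why the conclusion quotes $\xi\in[a,x]$ (closed) rather than the open interval, is the endpoint behaviour at $a$: the hypothesis only gives $D^{\alpha}w\in C(a,b]$, so continuity of the integrand factor at $a$ is not guaranteed. The clean way to accommodate this is to set $A=\big(\int_a^x k\,D^{\alpha}w\big)/\big(\int_a^x k\big)$, observe $\inf_{(a,x]}D^{\alpha}w\le A\le \sup_{(a,x]}D^{\alpha}w$, and conclude by the intermediate value theorem on the continuous function $D^{\alpha}w$ that the value $A$ is attained at some $\xi\in[a,x]$; the closed endpoint absorbs the boundary cases $A=\inf$ or $A=\sup$.
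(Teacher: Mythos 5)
The paper gives no proof of this lemma --- it is imported verbatim from the cited reference --- so there is nothing internal to compare against; your argument is in fact the standard proof from that source (fractional fundamental theorem of calculus plus the first mean value theorem for integrals), and it is essentially correct. The two technical points you isolate are the right ones: the kernel $(x-t)^{\alpha-1}$ is singular but absolutely integrable and of one sign, so the weighted mean value theorem applies (either in its integrable-weight form or by your truncation-and-limit argument), and the possible failure of continuity of $D^{\alpha}w$ at the left endpoint is correctly absorbed by running the intermediate value theorem on $(a,x]$ and allowing $\xi$ to lie in the closed interval. Your flag about the normalising constant is also a genuine catch: since $\int_a^x (x-t)^{\alpha-1}\,\mathrm{d}t=(x-a)^{\alpha}/\alpha$, the derivation unambiguously yields $w(x)=w(a)+\frac{1}{\Gamma(\alpha+1)}D^{\alpha}w(\xi)(x-a)^{\alpha}$, not the $\frac{1}{\Gamma(\alpha)}$ displayed in the lemma (the two coincide only at $\alpha=1$). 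This discrepancy is harmless for the way the paper uses the lemma --- the subsequent Remark only needs the sign of the coefficient of $D^{\alpha}w(\xi)$ to be positive in order to read off monotonicity from the sign of $D^{\alpha}w$, and both constants are positive --- but the corrected constant is the one consistent with the generalized Taylor expansion. One caveat worth stating explicitly if you write this up: the composition identity $I^{\alpha}D^{\alpha}w=w(\cdot)-w(a)$ for the Caputo derivative needs slightly more regularity than the bare hypotheses $w\in C[a,b]$, $D^{\alpha}w\in C(a,b]$ literally provide (one usually assumes $w$ absolutely continuous so that $w'$ exists and the Caputo derivative is defined); you assert it holds ``under exactly the regularity assumed here,'' which papers in this literature take for granted but which deserves a sentence.
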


\begin{remark}
Assume that $w(x)\in C[a,b]$ and  $D^{\alpha}w(x)\in C(a,b]$, for $0<\alpha\leq 1$. It follows from Lemma \ref{lema} that if $D^{\alpha}w(x)\geq 0, \forall x\in (a,b)$ then  $w(x)$ is non-decreasing $\forall x\in [a,b]$ and if $D^{\alpha}w(x)\leq 0, \forall x\in (a, b)$, then $w(x)$ is non-increasing  $\forall x\in [a,b].$
\end{remark}

\begin{theorem}
The malaria disease model~(\ref{SIRmalaria2}) has a unique solution and  it remains in $\mathbf{R^{5}_{+}}$.
\end{theorem}

\begin{proof}
By applying Theorem $3.1$ and Remark $3.2$ in \citep{wow10}, the existence and uniqueness of the solution of equation~(\ref{SIRmalaria2}) in $(0, \infty)$ follows. The domain $\mathbf{R^{5}_{+}}$ for the model problem is positively invariant, since

\begin{equation}\label{SIRmalaria2a}
\begin{array}{lllll}
\displaystyle D^{\alpha}s_h|_{s_h=0}&=& \displaystyle {\lambda^{\alpha}_{h}+\nu^{\alpha} i_{h}+\gamma^{\alpha} r_h\geq 0} ,\qquad \\[15pt]
\displaystyle D^{\alpha}i_h|_{i_h=0} &=& \displaystyle {a^{\alpha}bm s_h i_v\geq 0}, \qquad  \\[15pt]
\displaystyle D^{\alpha}r_h|_{r_h=0} &=& \displaystyle {r^{\alpha}i_{h}\geq 0}, \qquad  \\[15pt]
\displaystyle D^{\alpha}s_v|_{s_v=0}&=& \displaystyle {\lambda^{\alpha}_{v}\geq 0} ,\qquad \\[15pt]
\displaystyle D^{\alpha}i_v|_{i_v=0} &=& \displaystyle {a^{\alpha}c s_v i_h\geq 0}, \qquad
\end{array}
\end{equation}
on each hyperplane bounding the non-negative orthant, the vector filed points into $\mathbf{R^{5}_{+}}$.
\end{proof}

\section{Analysis of Model Equilibria}
In order to determine the model equilibria, we proceed this way.\\

Let

\begin{equation}\label{eqm}
\begin{cases}
D^{\alpha}s_h=0 &\\
D^{\alpha}i_h=0 & \\
D^{\alpha}r_h=0 &\\
D^{\alpha}s_v=0 & \\
D^{\alpha}i_v=0
\end{cases}
\end{equation}

\noindent
By solving equation~(\ref{eqm}) and ignoring the steps involved, the disease-free equilibrium is given by $F_o=(1, 0, 0, 1, 0)$ and the endemic equilibrium as \\ $F_{*}=(s^{*}_{h}, i^{*}_{h}, r^{*}_{h}, s^{*}_{v}, i^{*}_{v})$, where
$s^{*}_{h},  r^{*}_{h},  s^{*}_{v}$  and $i^{*}_{v}$ are expressed in terms $i^{*}_{h}$ as follows:

\begin{equation}\label{SIRmalaria3}
\begin{array}{lllll}
\displaystyle s^{*}_{h} &=& \displaystyle {\dfrac{(\lambda^{\alpha}_{v}+a^{\alpha}c i^{*}_{h})[(\lambda^{\alpha}_{h}+\gamma^{\alpha}-\delta^{\alpha}i^{*}_{h})(\lambda^{\alpha}_{h}+\nu i^{*}_{h})+\gamma^{\alpha}r^{\alpha} i^{*}_{h}]}{(\lambda^{\alpha}_{h}+\gamma^{\alpha}-\delta^{\alpha}i^{*}_{h})[(\lambda^{\alpha}_{h}-\delta^{\alpha }i^{*}_{h})(\lambda^{\alpha}_{v}+a^{\alpha}c i^{*}_{h})+a^{2\alpha}bmci^{*}_{h}]}} ,\qquad \\[15pt]
\displaystyle r^{*}_{h} &=& \displaystyle {\dfrac{r^{\alpha}i^{*}_{h}}{(\lambda^{\alpha}_{h}+\gamma^{\alpha}-\delta^{\alpha}i^{*}_{h})}}, \qquad  \\[15pt]
\displaystyle s^{*}_{v} &=& \displaystyle {\dfrac{\lambda^{\alpha}_{v}}{(\lambda^{\alpha}_{v}+a^{\alpha}c i^{*}_{h})}} ,\qquad \\[15pt]
\displaystyle i^{*}_{v}&=& \displaystyle {\dfrac{a^{\alpha}c i^{*}_{h}}{(\lambda^{\alpha}_{v}+a^{\alpha}c i^{*}_{h})}}, \qquad
\end{array}
\end{equation}

Next, we compute the basic reproduction number $R_o$ using the next generation method on the fractional order malaria model~(\ref{SIRmalaria2}).  By using the method and the notations by \cite{van2002}, the special matrices $F$( for new infections terms) and $V$ (for the remaining transitions terms) related to the model problem~(\ref{SIRmalaria2}) are respectfully given by
\begin{equation}
      F=\begin{bmatrix}
    0 &  & & a^{\alpha}bm \\ \\
    a^{\alpha}c & & &0

\end{bmatrix}
\end{equation}

and

\begin{equation}
      V=\begin{bmatrix}
    \nu^{\alpha}+r^{\alpha}+\lambda^{\alpha}_{h}+\delta^{\alpha}&  & & 0 \\ \\
    0 & & &\lambda^{\alpha}_{v}

\end{bmatrix}
\end{equation}

The basic reproduction number is the spectral radius of the generation matrix $F{V^{-1}}$, where\\

\begin{equation}
      F{V^{-1}}=\begin{bmatrix}
    0&  & &\frac{a^{\alpha}bm}{\lambda^{\alpha}_{v}} \\ \\
    \dfrac{a^{\alpha}c}{\nu^{\alpha}+r^{\alpha}+\lambda^{\alpha}_{h}+\delta^{\alpha}}& & &0

\end{bmatrix}
\end{equation}

Therefore we have\\

$R_{o}=\sqrt{\dfrac{a^{2\alpha}bmc}{{\lambda^{\alpha}_{v}}(\nu^{\alpha}+r^{\alpha}+\lambda^{\alpha}_{h}+\delta^{\alpha})}}$

\subsection{Local Asymptotic Stability Analysis of Disease-Free Equilibrium}
\begin{theorem}
The equilibrium $F_{o}$ of the malaria disease model~(\ref{SIRmalaria2}) is locally asymptotically stable if $R_o<1$ and unstable if $R_o>1.$
\end{theorem}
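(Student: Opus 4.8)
The plan is to invoke the standard stability criterion for Caputo fractional-order systems (the Matignon condition): for an autonomous system $D^{\alpha}X = f(X)$ with $0<\alpha\le 1$, an equilibrium $X^{*}$ is locally asymptotically stable provided every eigenvalue $\lambda$ of the Jacobian $J=\partial f/\partial X$ evaluated at $X^{*}$ satisfies $|\arg(\lambda)|>\alpha\pi/2$, and it is unstable whenever some eigenvalue satisfies $|\arg(\lambda)|<\alpha\pi/2$. First I would linearize the system~(\ref{SIRmalaria2}) about $F_{o}=(1,0,0,1,0)$ and write down the $5\times 5$ Jacobian, reading off each partial derivative at the disease-free state (where $i_h=i_v=r_h=0$ and $s_h=s_v=1$).

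Second, I would exploit the sparsity of this Jacobian. Through successive cofactor expansions along the $s_h$-, $r_h$- and $s_v$-columns, which decouple in turn, the characteristic polynomial factors as a product of three linear factors and one quadratic. The three linear factors give the eigenvalues $-\lambda^{\alpha}_{h}$, $-(\gamma^{\alpha}+\lambda^{\alpha}_{h})$ and $-\lambda^{\alpha}_{v}$, each a negative real number with argument $\pi>\alpha\pi/2$ for every $0<\alpha\le 1$; these three therefore never threaten stability, and the question reduces to the remaining $2\times 2$ block governing the infected compartments $(i_h,i_v)$,
\[
\begin{bmatrix}
-(\nu^{\alpha}+r^{\alpha}+\lambda^{\alpha}_{h}+\delta^{\alpha})-\lambda & a^{\alpha}bm \\
a^{\alpha}c & -\lambda^{\alpha}_{v}-\lambda
\end{bmatrix}.
\]

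Third, writing $K:=\nu^{\alpha}+r^{\alpha}+\lambda^{\alpha}_{h}+\delta^{\alpha}$ and substituting $a^{2\alpha}bmc=R_{o}^{2}\lambda^{\alpha}_{v}K$ from the formula for $R_{o}$, the associated characteristic equation becomes $\lambda^{2}+(K+\lambda^{\alpha}_{v})\lambda+K\lambda^{\alpha}_{v}(1-R_{o}^{2})=0$. The linear coefficient $K+\lambda^{\alpha}_{v}$ is always positive, so the sign of the constant term $K\lambda^{\alpha}_{v}(1-R_{o}^{2})$ governs everything. When $R_{o}<1$ both Routh–Hurwitz conditions for a quadratic hold, so both roots have negative real part; whether they are real or a complex-conjugate pair, $|\arg(\lambda)|\ge \pi/2>\alpha\pi/2$, and $F_{o}$ is locally asymptotically stable. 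When $R_{o}>1$ the constant term is negative, so the product of the roots is negative, forcing one real positive root with argument $0<\alpha\pi/2$, whence $F_{o}$ is unstable.

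The main obstacle is less in the algebra than in deploying the fractional criterion correctly: I must be careful that the comparison threshold is $\alpha\pi/2$ rather than the integer-order imaginary axis, and verify that any negative-real-part eigenvalue (real or a complex pair) clears $\alpha\pi/2$ for all admissible $\alpha$, while a single positive real eigenvalue always fails it. The one computational point that must be exact is the $R_{o}$-substitution, so that the constant term factors precisely as $K\lambda^{\alpha}_{v}(1-R_{o}^{2})$; any slip there would destroy the clean sign dichotomy at $R_{o}=1$ on which the whole conclusion rests.
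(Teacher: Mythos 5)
Your proposal is correct and follows essentially the same route as the paper: both reduce the $5\times 5$ Jacobian at $F_{o}$ to the three obvious negative real eigenvalues plus the $2\times 2$ infected-compartment block, and both decide stability from the sign of that block's determinant (your constant term $K\lambda^{\alpha}_{v}(1-R_{o}^{2})$ is exactly the paper's $\det B$) together with its negative trace. Your treatment is in fact slightly more complete, since you explicitly justify the instability claim for $R_{o}>1$ via the positive real root, a step the paper leaves implicit.
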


\begin{proof}

The Jacobian matrix $J(F_{o})$ for the malaria disease model~(\ref{SIRmalaria2}) evaluated at $F_{o}$ is given by

\begin{equation}\label{dfe}
J(F_{o})=\begin{bmatrix}
    -\lambda^{\alpha}_h & \nu^{\alpha}+\delta^{\alpha} & \gamma^{\alpha} & 0  & -a^{\alpha}bm \\
    0 & -(\nu^{\alpha}+r^{\alpha}+\lambda^{\alpha}_h+\delta^{\alpha}) & 0 & 0 & a^{\alpha}bm \\
    0& r^{\alpha}&-(\lambda^{\alpha}_h+\gamma^{\alpha}) &0 & 0 \\
    0 &-a^{\alpha}c & 0 & -\lambda{\alpha}_{v}  & 0\\
    0 &a^{\alpha}c & 0 & 0  & -\lambda^{\alpha}_{v}\\
\end{bmatrix}
\end{equation}

The disease-free equilibrium point $F_o$ is locally asymptotically if all the eigenvalues $\lambda_{i},  i=1, 2, 3, 4, 5$ of $J(F_o)$ satisfy the following condition [\cite{wow5}, \cite{wow6}]: $ \abs{arg(\lambda_{i })} >\frac{\alpha \pi}{2}$.\\

From the square matrix $J(F_{o})$, it is obvious that, three of the eigenvalues  are given by $-\lambda^{\alpha}_h, -(\lambda^{\alpha}_h+\gamma^{\alpha})$ and $-\lambda^{\alpha}_{v}$. It can be seen that all the three eigenvalues are negative and hence satisfy the stability condition: $ \abs{arg(\lambda_{i })} >\frac{\alpha \pi}{2}$.\\

The rest of the eigenvalues can determined from the sub-matrix given as:

\begin{equation}\label{dmt}
      B=\begin{bmatrix}
    -(\nu^{\alpha}+r^{\alpha}+\lambda^{\alpha}_h+\delta^{\alpha}) &  & & a^{\alpha}bm \\ \\
    a^{\alpha}c & & &-\lambda^{\alpha}_{v}

\end{bmatrix}
\end{equation}

The trace  of matrix $B$ is given by
\[tr(B)=-(\nu^{\alpha}+r^{\alpha}+\lambda^{\alpha}_h+\delta^{\alpha}+\lambda^{\alpha}_{v})<0\]

The determinant of $B$ yields :

\begin{align*}
det B &=  \lambda^{\alpha}_{v}(\nu^{\alpha}+r^{\alpha}+\lambda^{\alpha}_h+\delta^{\alpha})-a^{2\alpha}bmc \\ \\
&= \lambda^{\alpha}_{v}(\nu^{\alpha}+r^{\alpha}+\lambda^{\alpha}_h+\delta^{\alpha})\left[1-\dfrac{a^{2\alpha}bmc}{\lambda^{\alpha}_{v}(\nu^{\alpha}+r^{\alpha}+\lambda^{\alpha}_h+\delta^{\alpha})} \ \right]\\ \\
&=\lambda^{\alpha}_{v}(\nu^{\alpha}+r^{\alpha}+\lambda^{\alpha}_h+\delta^{\alpha})\left[1-R^2_{o} \right]
\end{align*}
\end{proof}
Therefore $det B >0 $ if $R_0<1$.\\

Since $det B >0 $ if $R_0<1$ and $tr(B)<0$, it follows that the two remaining eigenvalues will have negative real parts and we can therefore conclude that, the disease equilibrium is locally asymptotically stable, otherwise unstable.

\subsection{Local Asymptotic Stability Analysis of Endemic Equilibrium}
In this subsection, we discuss the local stability of the endemic equilibrium point. Our analysis is based on the construction by \cite{Ahmed2006, wow5}. From the model constraints $s_v+i_v=1$ and $s_h+i_h+r_h=1$, we can reduce the dimension of the malaria model in order to simplify and reduce the complexity of the analysis.\\

As in \cite{Tumwiine2007a, Tumwiine2007b}, let $s_v=1-i_v$ and $r_h=1-s_h-i_h$ then the 5-dimensional fractional order model~(\ref{SIRmalaria2}) reduces to a 3-dimensional problem with Jacobian matrix evaluated the at endemic equilibrium $F_{*}$ as:

\begin{equation*}
J(F_{*})=\begin{bmatrix}
    -(\lambda^{\alpha}_h +\gamma^{\alpha}+a^{\alpha}bmi^{*}_{v}-\delta^{\alpha}i^{*}_{h})& \nu^{\alpha}+\delta^{\alpha}s^{*}_{h}-\gamma^{\alpha}  &- a^{\alpha}bms^{*}_{h}\\ \\
    a^{\alpha}bmi^{*}_{v} & -(\nu^{\alpha}+r^{\alpha}+\lambda^{\alpha}_h+\delta^{\alpha}-2\delta^{\alpha} i^{*}_{h}) & a^{\alpha}bm s^{*}_{h} \\ \\
    0& a^{\alpha}c(1-i^{*}_v)&-(\lambda^{\alpha}_v+a^{\alpha}ci^{*}_h)

\end{bmatrix}
\end{equation*}

The characteristics equation of the Jacobian matrix, $J(F_{*})$ is
\begin{equation}\label{cequation}
  \lambda^{3}+b_1\lambda^{2}+b_2\lambda+b3=0
\end{equation}

where

\begin{equation*}
  \begin{split}
    b_1= &  \lambda^{\alpha}_{v}+a^{\alpha}ci^{*}_{h}+ \lambda^{\alpha}_{h}+\gamma^{\alpha}_{v}+a^{\alpha}bmi^{*}_{v}-\delta^{\alpha}i^{*}_{h}+\nu^{\alpha}+r^{\alpha}+ \lambda^{\alpha}_{h}+\delta^{\alpha}-2\delta^{\alpha}i^{*}_{h}\\
    \\
    b_2= & (\lambda^{\alpha}_v+a^{\alpha}ci^{*}_h)(\nu^{\alpha}+r^{\alpha}+\lambda^{\alpha}_h+\delta^{\alpha}-2\delta^{\alpha} i^{*}_{h})-a^{\alpha}bmc(1-i^{*}_v)s^{*}_h+(\gamma^{\alpha}-\nu^{\alpha}-\delta^{\alpha}s^{*}_h)a^{\alpha}bmi^{*}_v\\
    & +(\lambda^{\alpha}_{h}+\gamma^{\alpha} +a^{\alpha}bmi^{*}_v -\delta^{\alpha} i^{*}_{h})(\lambda^{\alpha}_v+a^{\alpha}ci^{*}_h+\nu^{\alpha}+r^{\alpha}+\lambda^{\alpha}_h+\delta^{\alpha}-2\delta^{\alpha} i^{*}_{h})\\ \\
     b_3= & (\lambda^{\alpha}_{h}+\gamma^{\alpha}+a^{\alpha}bmi^{*}_{v}-\delta^{\alpha}i^{*}_{h})[(\lambda^{\alpha}_{v}+a^{\alpha}ci^{*}_{h})(\nu^{\alpha}+r^{\alpha}+\lambda^{\alpha}_h+\delta^{\alpha}-2\delta^{\alpha} i^{*}_{h})-a^{2\alpha}bmc(1-i^{*}_v)s^{*}_{h}]\\
    & +a^{3\alpha}b^{2}mc(1-i^{*}_v)s^{*}_{h}+a^{\alpha}bmi^{*}_v(\lambda^{\alpha}_{v}+a^{\alpha}ci^{*}_h)(\gamma^{\alpha}-\nu^{\alpha}-\delta^{\alpha}s^{*}_{h})
  \end{split}
\end{equation*}

Let $D(g)$ represent the discriminant of the polynomial function

\[g(x)=x^{3}+b_1x^{2}+b_2x+b3\]\\

where
\begin{equation*}
D(g)=-\left|
\begin{array}{cccccccc}
1 & &b_1 && b_2 & b_3& &0\\
0 & &1& &b_1 & b_2 &&b_3\\
3 & &2 b_1& &b_2 & 0&&0\\
0 & &3& &2b_1&b_2 &&0\\
0 & &0& &3 &2b_1&&b_2
\end{array}
\right|=18b_{1}b_{2}b_{3}+(b_1 b_2)^2-4b_3 b^{3}_{1}-4b^{3}_{2}-27b^{2}_{3}
\end{equation*}

By following the results in \citep{Ahmed2006, wow5} and similar applications of their method in \citep{El-Shahed2011, OZLAP2011, wow40}, we obtain the following proposition.

\begin{proposition} One assume that $F_{*}$ exist in $\mathbf{R^{3}_{+}}$.\\

(i) If the discriminant of $g(x)$, $D(g)$ is positive and Routh-Hurwitz conditions are satisfied , i.e.
$D(g)>0,\ b_1>0, \ b_3>0$, then the endemic equilibrium $F_{*}$ locally asymptotically stable.

(ii)  If $D(g)<0,\ b_1\geq 0,\ b_2\geq0, \ b_3>0, \ \alpha<2/3$, then the endemic equilibrium $F_{*}$ locally asymptotically stable.

(iii)If $D(g)<0,\ b_1>0,\ b_2>0, \ b_1b_2=b_3, \ \alpha\in (0,1),$ then the endemic equilibrium $F_{*}$ locally asymptotically stable.

(iv) If $D(g)<0,\ b_1<0,\ b_2<0, \ \alpha>2/3$, then the endemic equilibrium $F_{*}$ is unstable.
\end{proposition}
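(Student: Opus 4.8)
The plan is to reduce the claim to the fractional stability criterion of Matignon already invoked in the proof of the previous theorem, namely that the endemic equilibrium $F_{*}$ is locally asymptotically stable precisely when every eigenvalue $\lambda_i$ of the reduced $3\times 3$ Jacobian $J(F_{*})$ satisfies $\abs{arg(\lambda_i)} > \alpha\pi/2$. Since the $\lambda_i$ are exactly the roots of the characteristic cubic $g(\lambda)=\lambda^{3}+b_1\lambda^{2}+b_2\lambda+b_3$, the whole problem becomes one of locating these roots relative to the two rays $arg\,\lambda=\pm\alpha\pi/2$, using only the sign of the discriminant $D(g)$ and the signs of $b_1,b_2,b_3$. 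First I would recall the classical fact that for a real cubic the sign of $D(g)$ fixes the gross structure of the root set: $D(g)>0$ forces three distinct real roots, while $D(g)<0$ forces one real root together with a complex-conjugate pair $\rho e^{\pm i\theta}$ with $\rho>0$ and $\theta\in(0,\pi)$. This dichotomy is exactly what splits the proposition into its four cases.

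For case (i), with three real eigenvalues, the requirement $\abs{arg(\lambda_i)}>\alpha\pi/2$ is equivalent to each root being negative (a negative real has argument $\pi$, a positive one argument $0$), so the verification reduces to the stated Routh--Hurwitz conditions and is routine symmetric-function bookkeeping. The substantive work lies in the complex cases (ii)--(iv), where the (negative) real root contributes $arg=\pi$ and causes no trouble, and everything hinges on the argument $\theta$ of the complex pair.

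The main obstacle I expect is proving the angular bound that converts the sign constraints on $b_1,b_2,b_3$ into a definite interval for $\theta$. The clean way to do this is to factor $g(\lambda)=(\lambda+s)(\lambda^{2}-2\rho\cos\theta\,\lambda+\rho^{2})$ with $s,\rho>0$, which gives $b_1=s-2\rho\cos\theta$, $b_2=\rho^{2}-2\rho s\cos\theta$ and $b_3=s\rho^{2}>0$. When $D(g)<0$ and $b_1,b_2\ge 0$ these read $\cos\theta\le s/(2\rho)$ and $\cos\theta\le \rho/(2s)$; multiplying yields $\cos^{2}\theta\le 1/4$, hence $\theta\ge\pi/3$, so $\theta>\alpha\pi/2$ whenever $\alpha<2/3$. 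This is precisely why the threshold $2/3$ appears in (ii), and why the reversed inequalities $b_1,b_2<0$ in (iv) give $\cos^{2}\theta>1/4$, i.e. $\theta<\pi/3<\alpha\pi/2$ for $\alpha>2/3$, forcing at least one eigenvalue into the instability region. Case (iii) is the boundary situation $b_1b_2=b_3$, in which the complex pair sits on the imaginary axis with $\theta=\pi/2$, so $\abs{arg}=\pi/2>\alpha\pi/2$ for every $\alpha\in(0,1)$.

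Finally I would assemble these observations exactly within the fractional Routh--Hurwitz framework of Ahmed, El-Sayed and El-Saka, citing \citep{Ahmed2006, wow5}, to conclude stability in (i)--(iii) and instability in (iv). The only genuine analytic content beyond coefficient bookkeeping is the bound $\theta\ge\pi/3$ (resp.\ $\theta<\pi/3$); once that is in hand, each case follows immediately by comparing $\theta$ (or $\pi$) against $\alpha\pi/2$.
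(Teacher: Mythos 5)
Your proposal is correct in substance, but it takes a genuinely different route from the paper: the paper offers no proof at all of this proposition, stating only that it ``follows the results'' of Ahmed, El-Sayed and El-Saka \citep{Ahmed2006, wow5}, i.e.\ it invokes the fractional Routh--Hurwitz conditions as a black box. What you have done is reconstruct the proof of that underlying lemma itself: reducing to Matignon's criterion $|\arg(\lambda_i)|>\alpha\pi/2$, using the sign of $D(g)$ to split into the all-real and complex-pair cases, and then extracting the angular bound on the complex pair from the factorization $g(\lambda)=(\lambda+s)(\lambda^{2}-2\rho\cos\theta\,\lambda+\rho^{2})$, so that $b_1\ge 0$ and $b_2\ge 0$ give $\cos^{2}\theta\le 1/4$, hence $\theta\ge\pi/3>\alpha\pi/2$ for $\alpha<2/3$, with the reversed inequalities in case (iv) and $\cos\theta=0$ forced by $b_1b_2=b_3$ in case (iii). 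This is exactly where the threshold $2/3$ comes from, and your argument makes the proposition self-contained where the paper's is a one-line citation; the cost is that you are re-proving a published lemma rather than merely applying it, which is what the paper (and its cited precedents) actually do.

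Two loose ends are worth tightening. First, in case (i) the hypotheses as literally stated ($D(g)>0$, $b_1>0$, $b_3>0$) do not by themselves force all three real roots to be negative (e.g.\ roots $1,2,-10$ give $b_1=7>0$, $b_3=20>0$, $D>0$ but $b_2<0$ and two positive roots); one needs the full Routh--Hurwitz triple including $b_1b_2>b_3$ (equivalently, here, $b_2>0$), which is what the phrase ``Routh--Hurwitz conditions are satisfied'' is meant to supply --- your ``routine bookkeeping'' should say this explicitly. Second, case (iv) does not assume $b_3>0$, so you cannot assert that the real root $-s$ is negative there; however, if $s\le 0$ the real root is nonnegative and violates $|\arg(\lambda)|>\alpha\pi/2$ on its own, so instability still follows and your conclusion stands.
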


\section{Numerical Simulations}
Analytical solutions of nonlinear coupled fractional order differential equations are as difficult as the classical integer order system of nonlinear equations. Therefore, there is need to consider numerical discretizations and approximations for such systems or models. Over the past years, several numerical schemes have been developed to solve fractional order models. One of the powerful approximation methods that has been considered by many authors \citep[see, e.g,][]{Ahmed2007logistic, OZLAP2011, wow40, sheen2015, Okyere2016a} is the efficient Adams-type predictor-corrector method developed by \cite{wow3, wow4}. \cite{Tao2009} have considered a rigorous error analysis of this numerical scheme.\\

For our numerical approximations, we re-define the variables in the model problem~(\ref{SIRmalaria2}) as:

Let $s_h=X,  \ i_h=Y,  \ r_h=Z,  \ s_v=S,  \ i_v=I.$

Therefore following the approach in \citep{wow3, wow4, sheen2015, wow40} and ignoring the details in the derivation of the scheme, we obtain the discretized system corresponding to the fractional order malaria model~(\ref{SIRmalaria2})as follows:

\begin{equation*}
  \begin{split}
   X_{k+1}= & X_0+\frac{1}{\Gamma (\alpha)} \sum_{j=0}^{k}a_{_{j,k+1} }\biggl[\lambda^{\alpha}_{h}(1-X_j)-a^{\alpha}bm X_{j} I_j+\nu^{\alpha} Y_j+\gamma^{\alpha} Z_j+\delta^{\alpha} X_{j} Y_{j}\biggr]\\ + & \frac{1}{\Gamma (\alpha)}\biggl[\lambda^{\alpha}_{h}(1-X^{p}_{k+1})-a^{\alpha}bm X^{p}_{k+1} I^{p}_{k+1}+\nu^{\alpha} Y^{p}_{k+1}+\gamma^{\alpha} Z^{p}_{k+1}+\delta^{\alpha} X^{p}_{k+1} Y^{p}_{k+1}\biggr]\\
    \\
    Y_{k+1}= & Y_0+\frac{1}{\Gamma (\alpha)} \sum_{j=0}^{k}a_{_{j,k+1} }\biggl[a^{\alpha}bm X_j I_j-(\nu^{\alpha} +r^{\alpha}+\lambda^{\alpha}_{h}+\delta^{\alpha} )Y_j-\delta^{\alpha} Y^2_{j}\biggr]\\ + & \frac{1}{\Gamma (\alpha)}\biggl[a^{\alpha}bm X^{p}_{k+1} I^{p}_{k+1}-(\nu^{\alpha} +r^{\alpha}+\lambda^{\alpha}_{h}+\delta^{\alpha} )Y^{p}_{k+1}+\delta^{\alpha} (Y^{P}_{K+1})^2\biggr]\\ \\
      Z_{k+1}= & Z_0+\frac{1}{\Gamma (\alpha)} \sum_{j=0}^{k}a_{_{j,k+1} }\biggl[r Y_{j}-(\gamma^{\alpha}+\lambda^{\alpha}_{h})Z_j+\delta Y_j Z_j\biggr] + \frac{1}{\Gamma(\alpha)}\biggl[r Y^{p}_{k+1}-(\gamma^{\alpha}+\lambda^{\alpha}_{h})Z^{p}_{k+1}+\delta Y^{p}_{k+1} Z^{p}_{k+1}\biggr]\\\\
     S_{k+1}= & S_0+\frac{1}{\Gamma (\alpha)} \sum_{j=0}^{k}a_{_{j,k+1} }\biggl[\lambda^{\alpha}_{v}(1-S_J)-a^{\alpha}c Y_j S_j\biggr] + \frac{1}{\Gamma (\alpha)}\biggl[\lambda^{\alpha}_{v}(1-S^{p}_{k+1})-a^{\alpha}c Y^{p}_{k+1} S^{p}_{k+1}\biggl]\\\\
    I_{k+1}= & I_0+\frac{1}{\Gamma (\alpha)} \sum_{j=0}^{k}a_{_{j,k+1}}\biggl[a^{\alpha}c S_j Y_j-\lambda^{\alpha}_{v}I_j\biggr] + \frac{1}{\Gamma (\alpha)}\biggl[a^{\alpha}c S^{p}_{k+1} Y^{p}_{k+1}-\lambda^{\alpha}_{v}I^{p}_{k+1}\biggl]
  \end{split}
\end{equation*}

where
\begin{equation*}
  \begin{split}
   X^{p}_{k+1}= & X_0+\frac{1}{\Gamma (\alpha)} \sum_{j=0}^{k}b_{_{j,k+1} }\biggl[\lambda^{\alpha}_{h}(1-X_j)-a^{\alpha}bm X_{j} I_j+\nu^{\alpha} Y_j+\gamma^{\alpha} Z_j+\delta^{\alpha} X_{j} Y_{j}\biggr]\\
    \\
    Y^{p}_{k+1}= & Y_0+\frac{1}{\Gamma (\alpha)} \sum_{j=0}^{k}b_{_{j,k+1} }\biggl[a^{\alpha}bm X_j I_j-(\nu^{\alpha} +r^{\alpha}+\lambda^{\alpha}_{h}+\delta^{\alpha} )Y_j-\delta^{\alpha} Y^2_{j}\biggr]\\
    \\
      Z^{p}_{k+1}= & Z_0+\frac{1}{\Gamma (\alpha)} \sum_{j=0}^{k}b_{_{j,k+1} }\biggl[r Y_{j}-(\gamma^{\alpha}+\lambda^{\alpha}_{h})Z_j+\delta Y_j Z_j\biggr] \\\\
     S^{p}_{k+1}= & S_0+\frac{1}{\Gamma (\alpha)} \sum_{j=0}^{k}b_{_{j,k+1} }\biggl[\lambda^{\alpha}_{v}(1-S_J)-a^{\alpha}c Y_j S_j\biggr] \\\\
    I^{p}_{k+1}= & I_0+\frac{1}{\Gamma (\alpha)} \sum_{j=0}^{k}b_{_{j,k+1} }\biggl[a^{\alpha}c S_j Y_j-\lambda^{\alpha}_{v}I_j\biggr]
  \end{split}
\end{equation*}

$ a_{_{j,k+1} }=\dfrac{h^{\alpha}}{\alpha (\alpha +1)}
  \begin{cases}
    k^{\alpha+1}-(k-\alpha)(k+1)^{\alpha}       & \quad \text{if } j=0\\
    (k-j+2)^{\alpha+1}+  (k-j)^{\alpha+1}-2(k-j+1)^{\alpha+1} &   \quad \text{if } 1\leq j\leq k\\
    1  & \quad \text{if } j=k+1
  \end{cases}
$
and

$b_{_{j,k+1} }=\frac{h^{\alpha}}{\alpha}\left[(k-j+1)^{\alpha}-(k-j)^{\alpha}\right],  \ \ \ 0\leq j\leq k $\\

We then implement our numerical scheme in Matlab and the simulation results are discussed in the next section.

\section{Results and Discussion}
In this section, we discuss the endemic trajectories of our numerical solutions with model parameter values adapted from the work by \cite{Tumwiine2007a} and basic reproduction number $R_o=1.5$. The simulated solutions for the scaled human population $s_h(t), \ i_h(t)$ and $r_h (t)$ are shown in Figure \ref{fg1} and that of the scaled mosquito populations $s_v (t), \ i_v (t)$  are shown in Figure \ref{fg2}. In Figures~\ref{fg3} and \ref{fg4}, we considered four different values of the fractional $\alpha$, $\alpha=1,\ 0.99, \ 0.95\ 0.90$. The oscillatory behavior of the fractional order malaria model~(\ref{SIRmalaria2}) are interesting and can be compared to the research work be \cite{Tumwiine2007a}. It is important to observe that when $\alpha=1$, the model problem~(\ref{SIRmalaria2}) is equivalent to the classical initial value problem~(\ref{SIRmalaria2}) and  that as the fractional order $\alpha$ increases the behaviour of the fractional order model solutions approaches that of the integer order model. Phase plane portrait is displayed in Figure~\ref{fg5} to describe the dynamics between susceptible fractions of human hosts and infected fractions of mosquito vectors.
\newpage
\begin{figure}[h!]
\centering
\subfigure[]{
\includegraphics[scale=0.6]{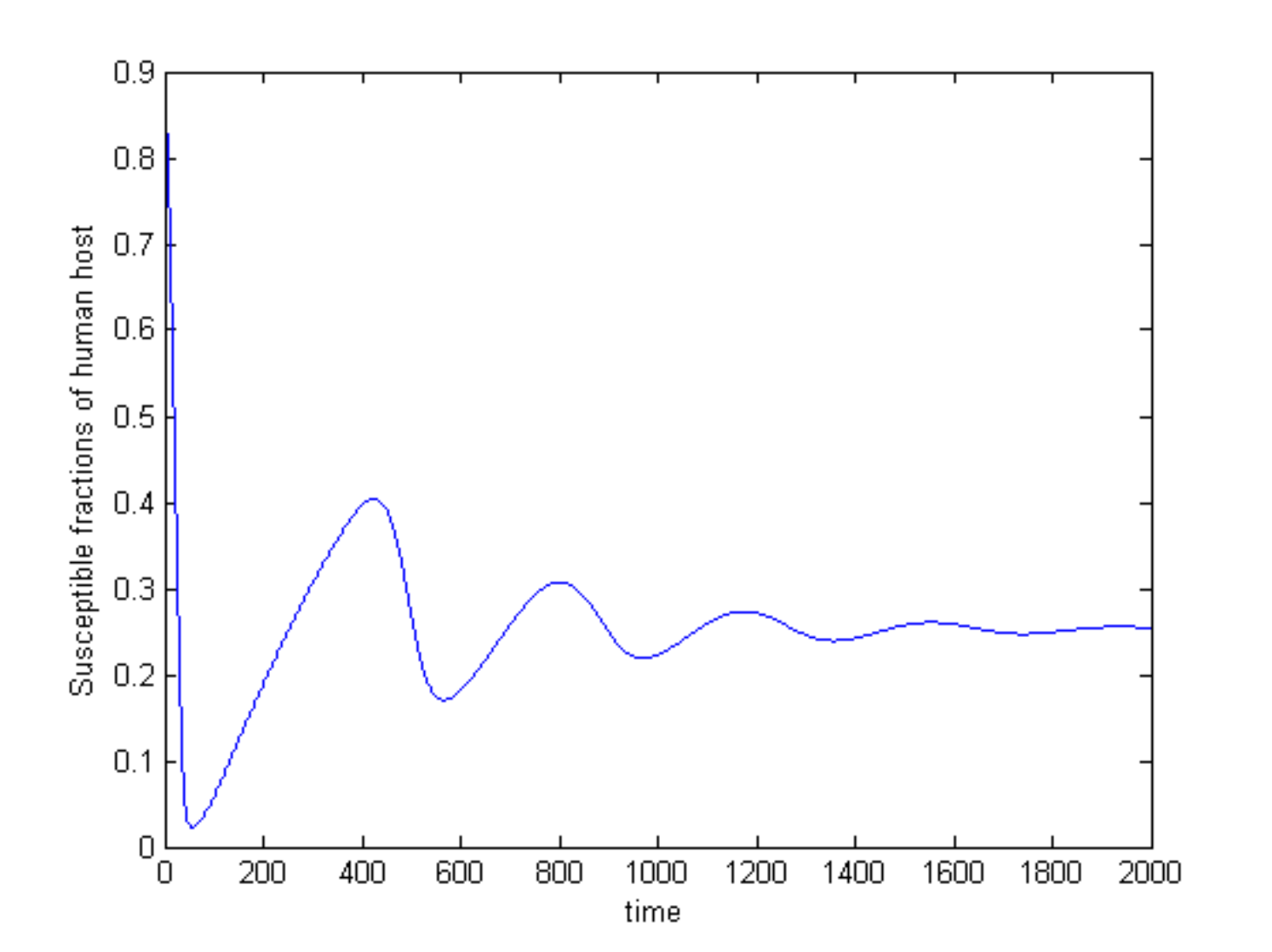}}\hfil
\subfigure[]{
\includegraphics[scale=0.6]{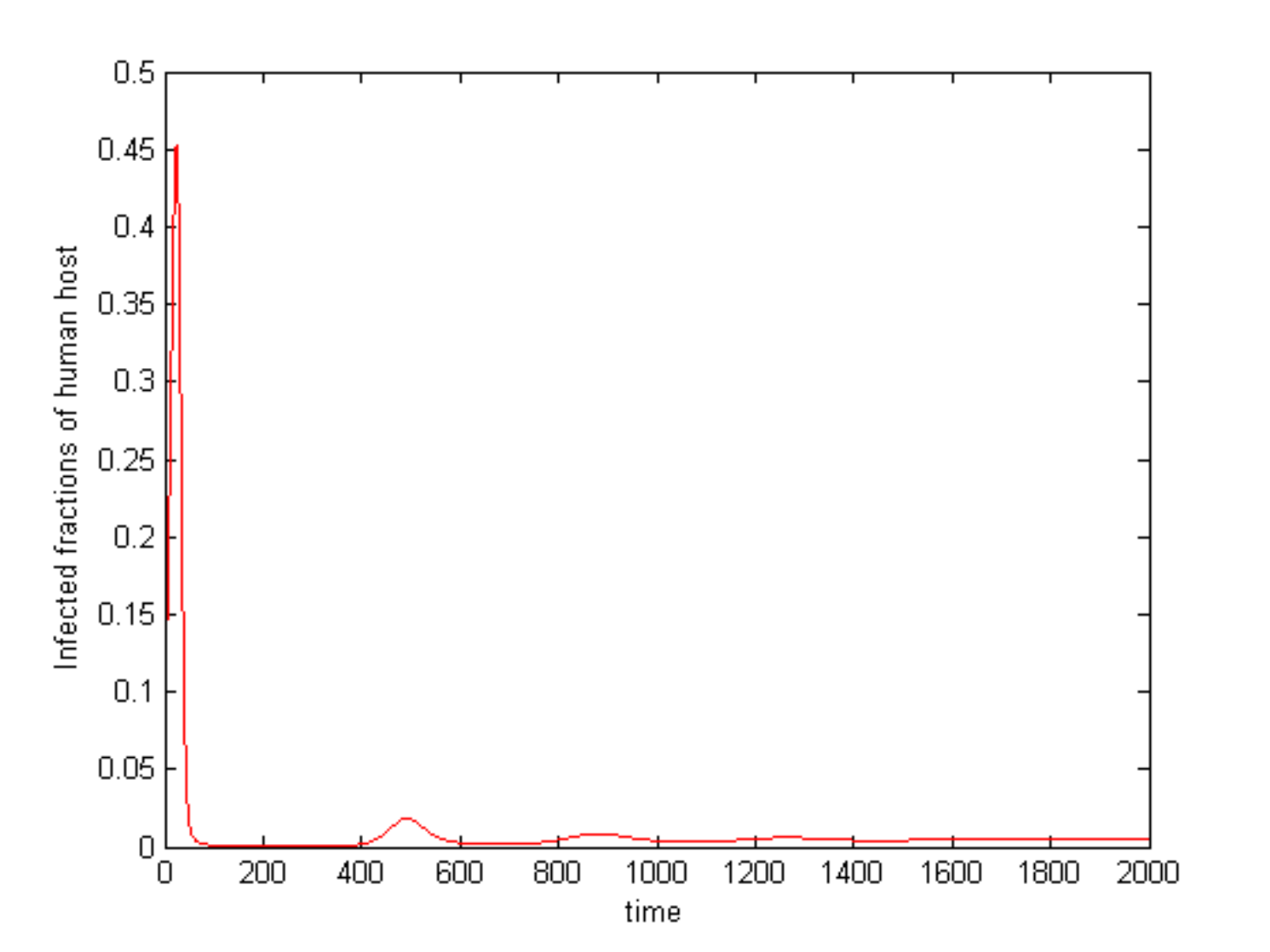}}\hfil
\subfigure[]{\includegraphics[scale=0.6]{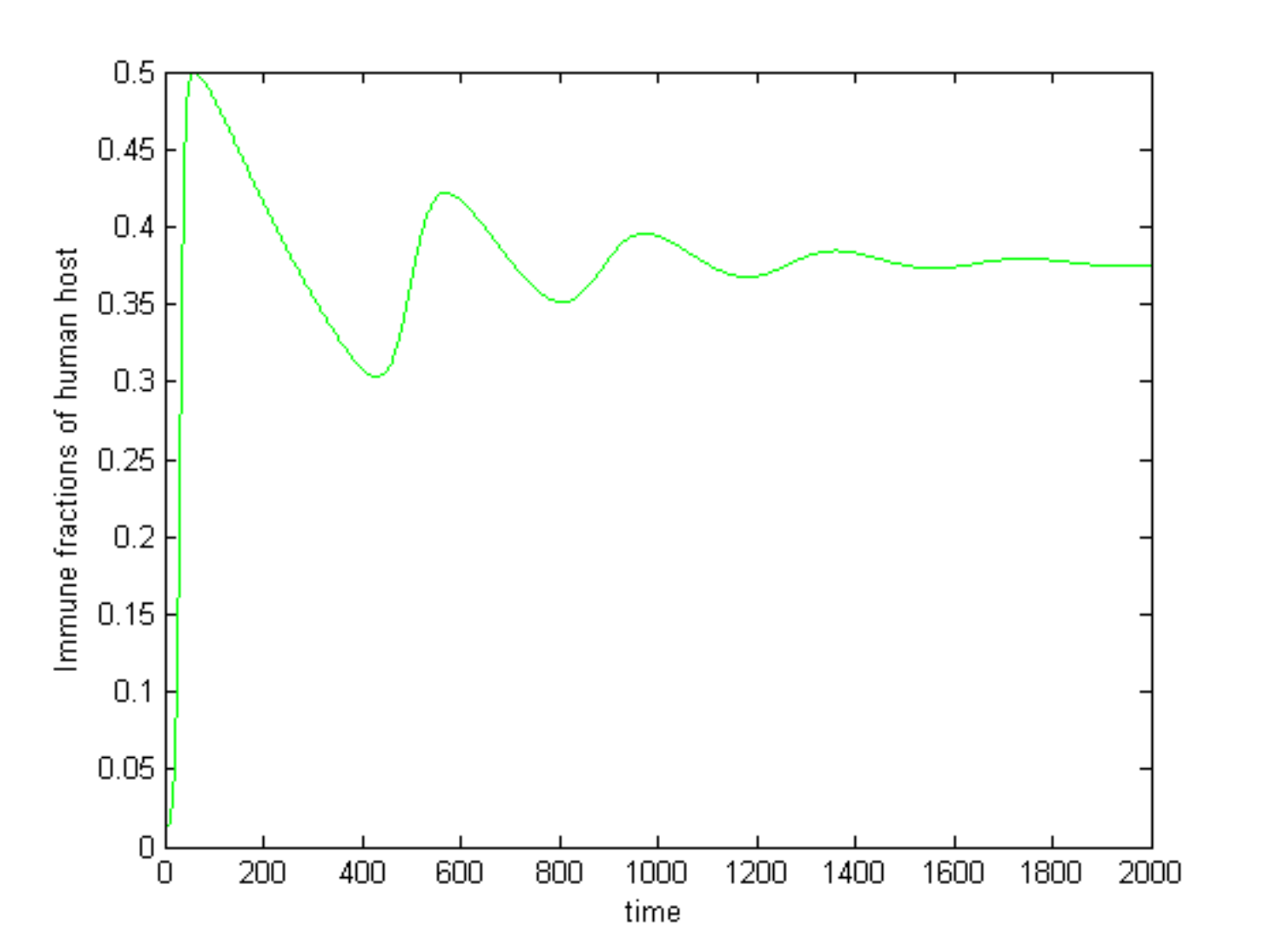}}
\caption{Solutions of fractional order model with $\alpha=1$.}
\label{fg1}
\end{figure}

\begin{figure}[h!]
\centering
\subfigure[]{
\includegraphics[scale=0.6]{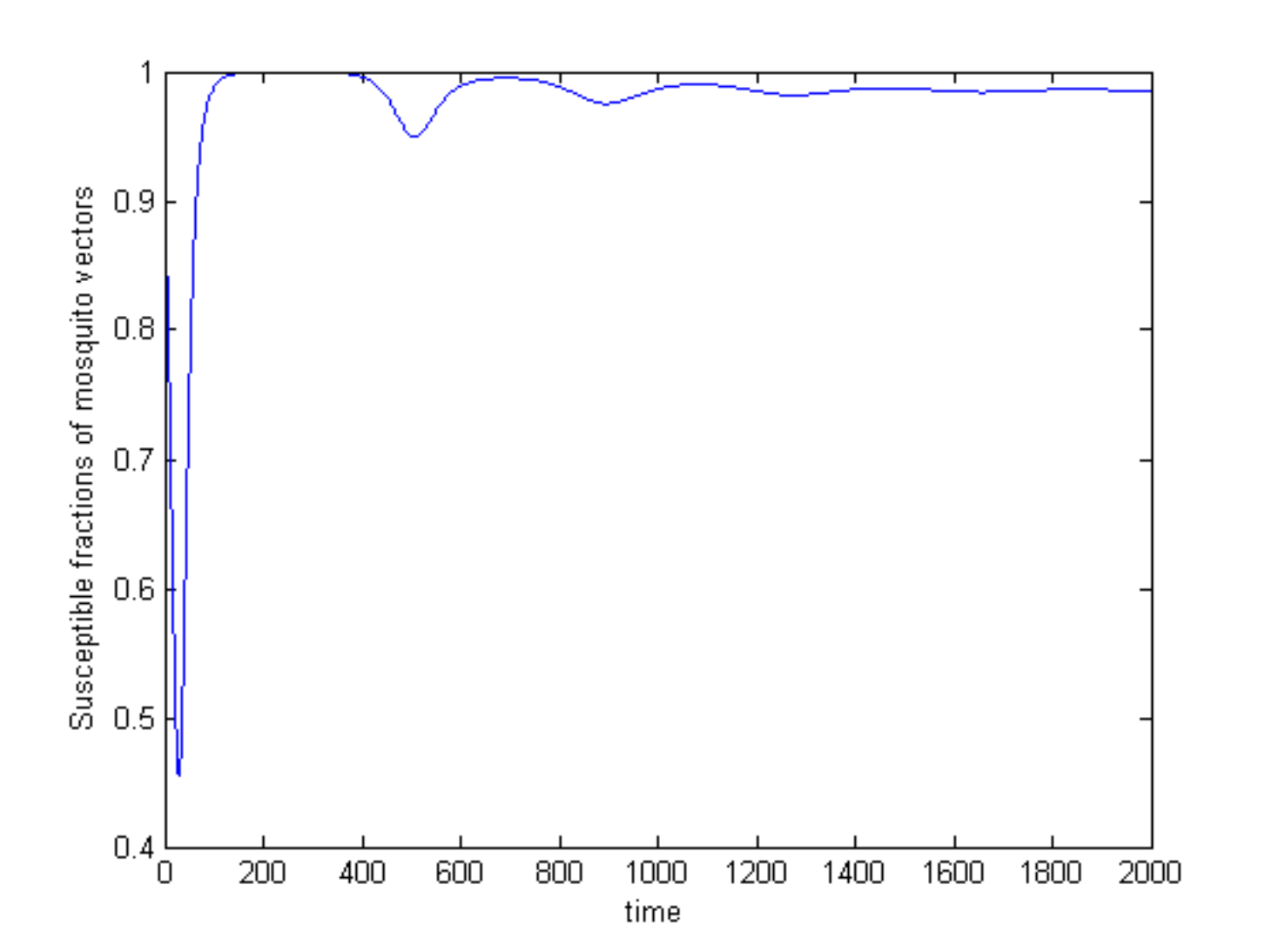}}\hfil
\subfigure[]{\includegraphics[scale=0.6]{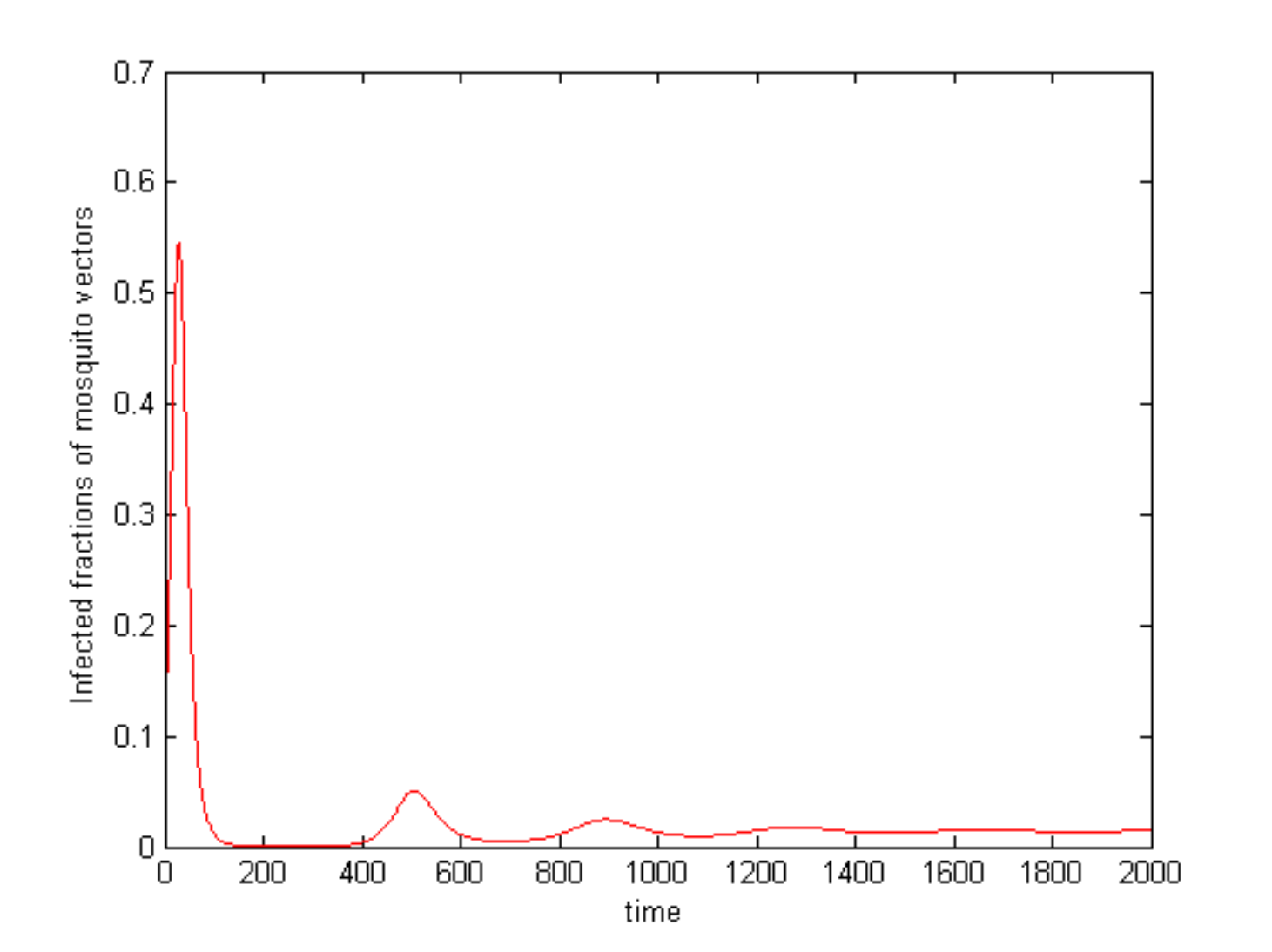}}
\caption{Solutions of fractional order model with $\alpha=1$.}
\label{fg2}
\end{figure}

\begin{figure}[h!]
\centering
\subfigure[]{
\includegraphics[scale=0.6]{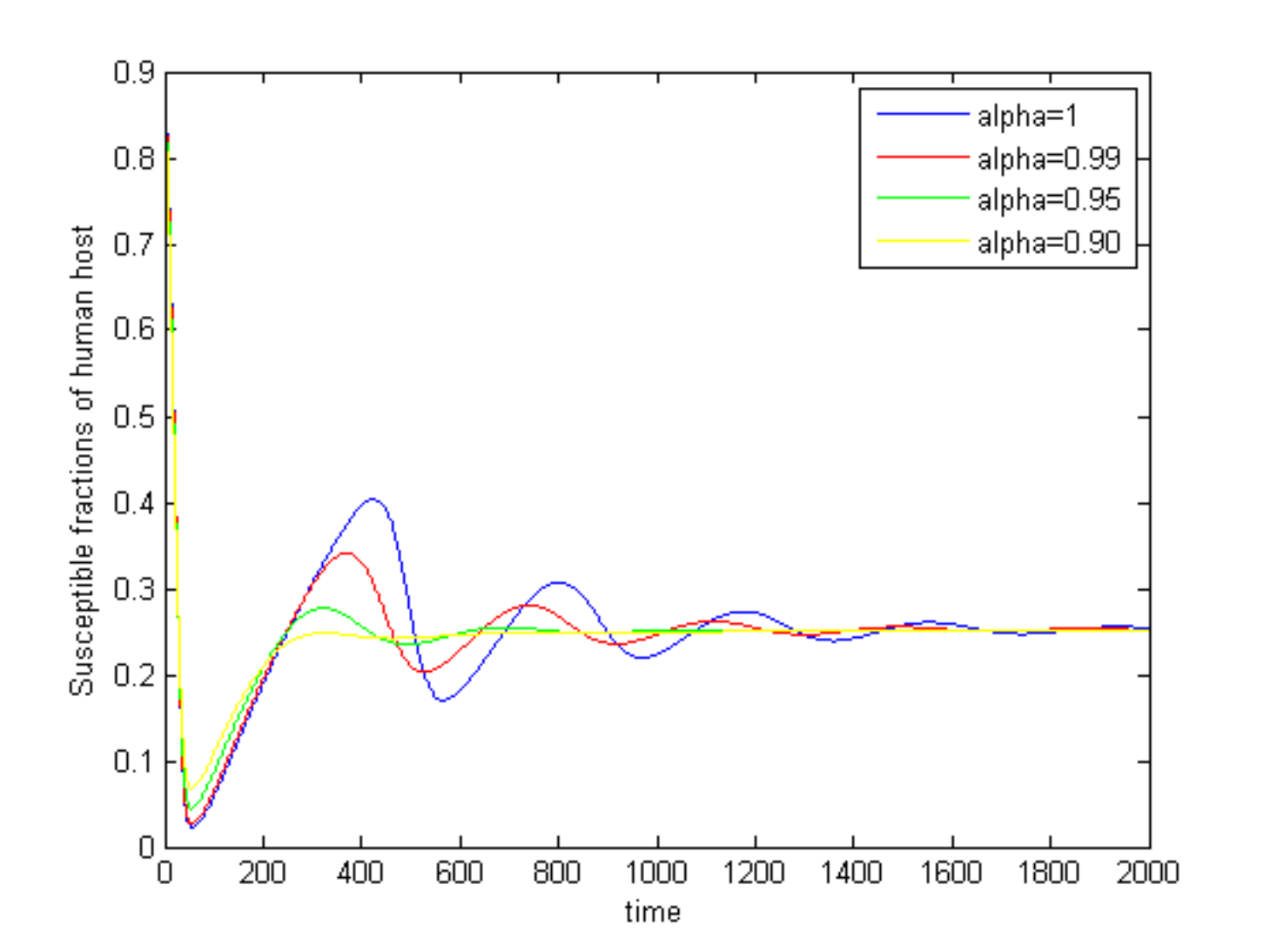}}\hfil
\subfigure[]{
\includegraphics[scale=0.6]{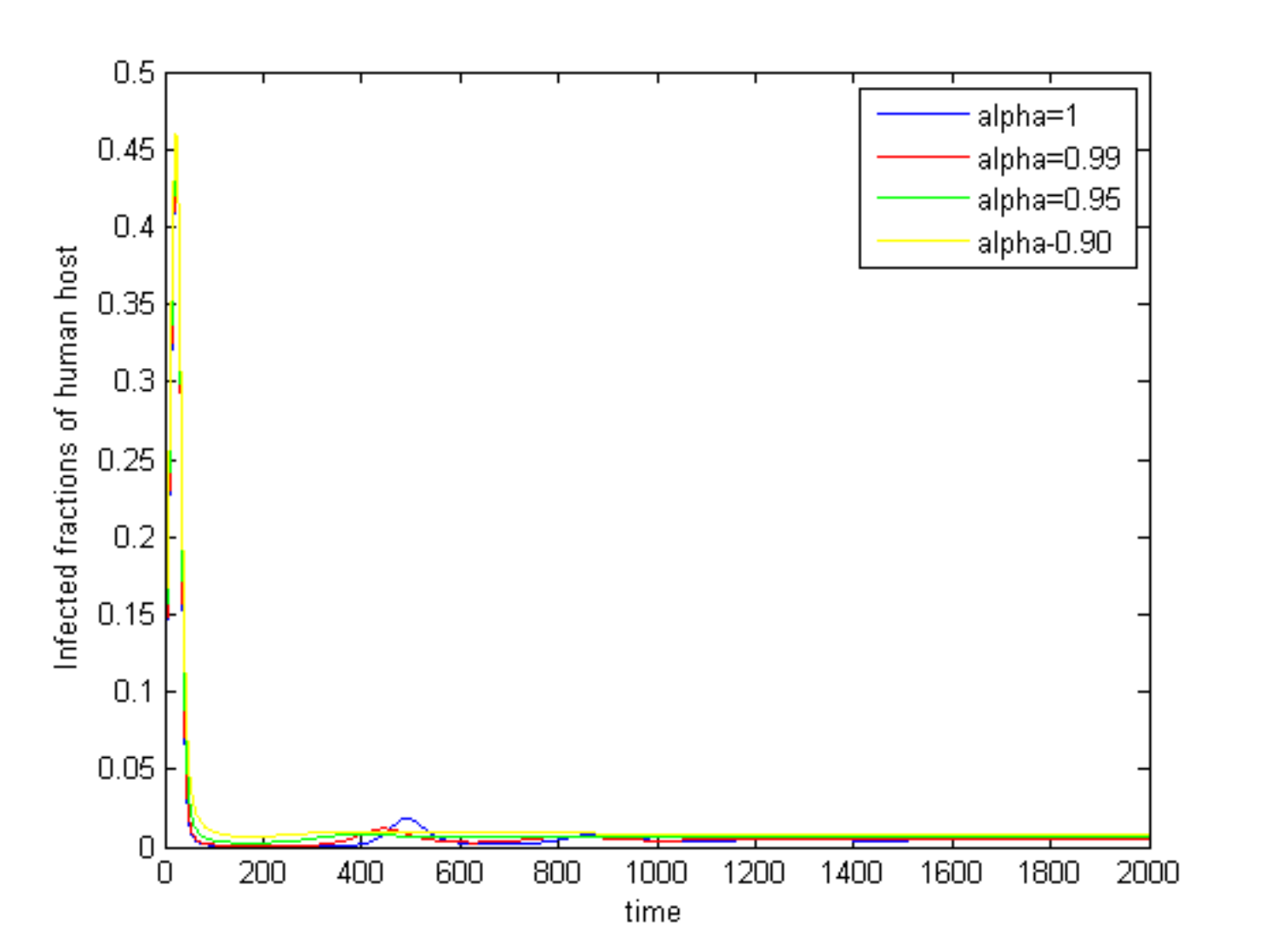}}\hfil
\subfigure[]{\includegraphics[scale=0.6]{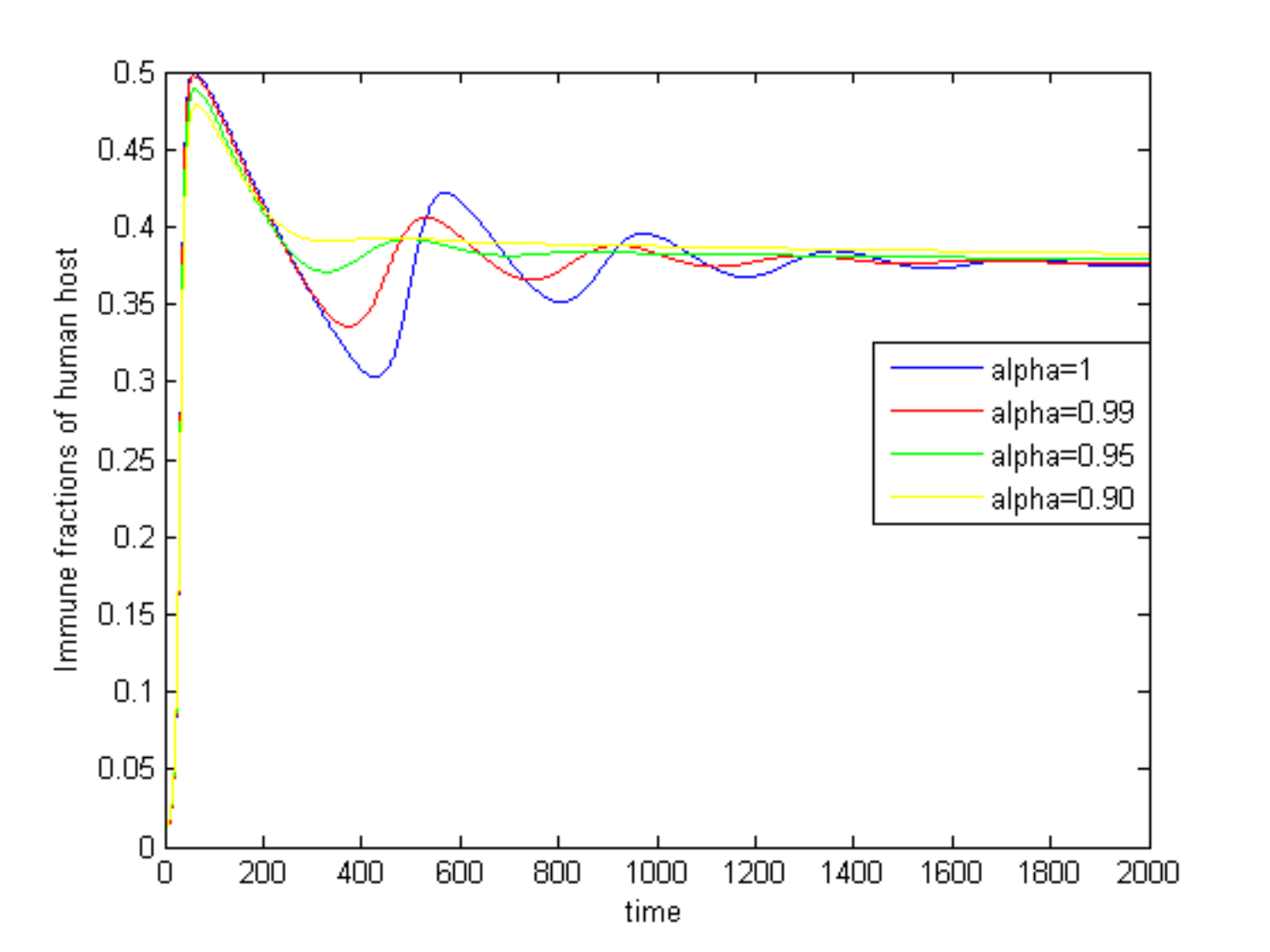}}
\caption{Solutions of fractional order model with $\alpha=1, 0.99, 0.95, 0.90$.}
\label{fg3}
\end{figure}

\begin{figure}[h!]
\centering
\subfigure[]{
\includegraphics[scale=0.6]{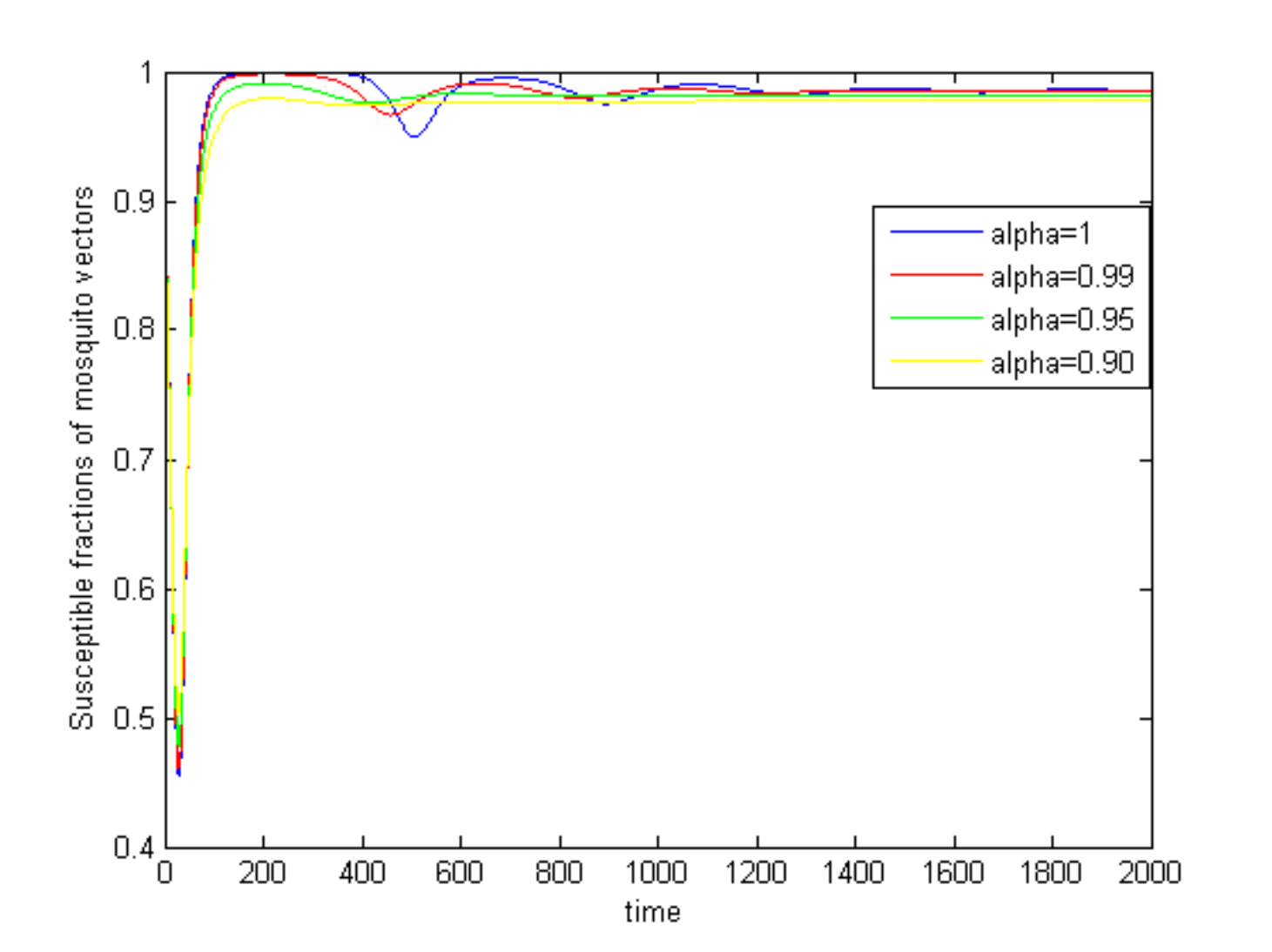}}\hfil
\subfigure[]{\includegraphics[scale=0.6]{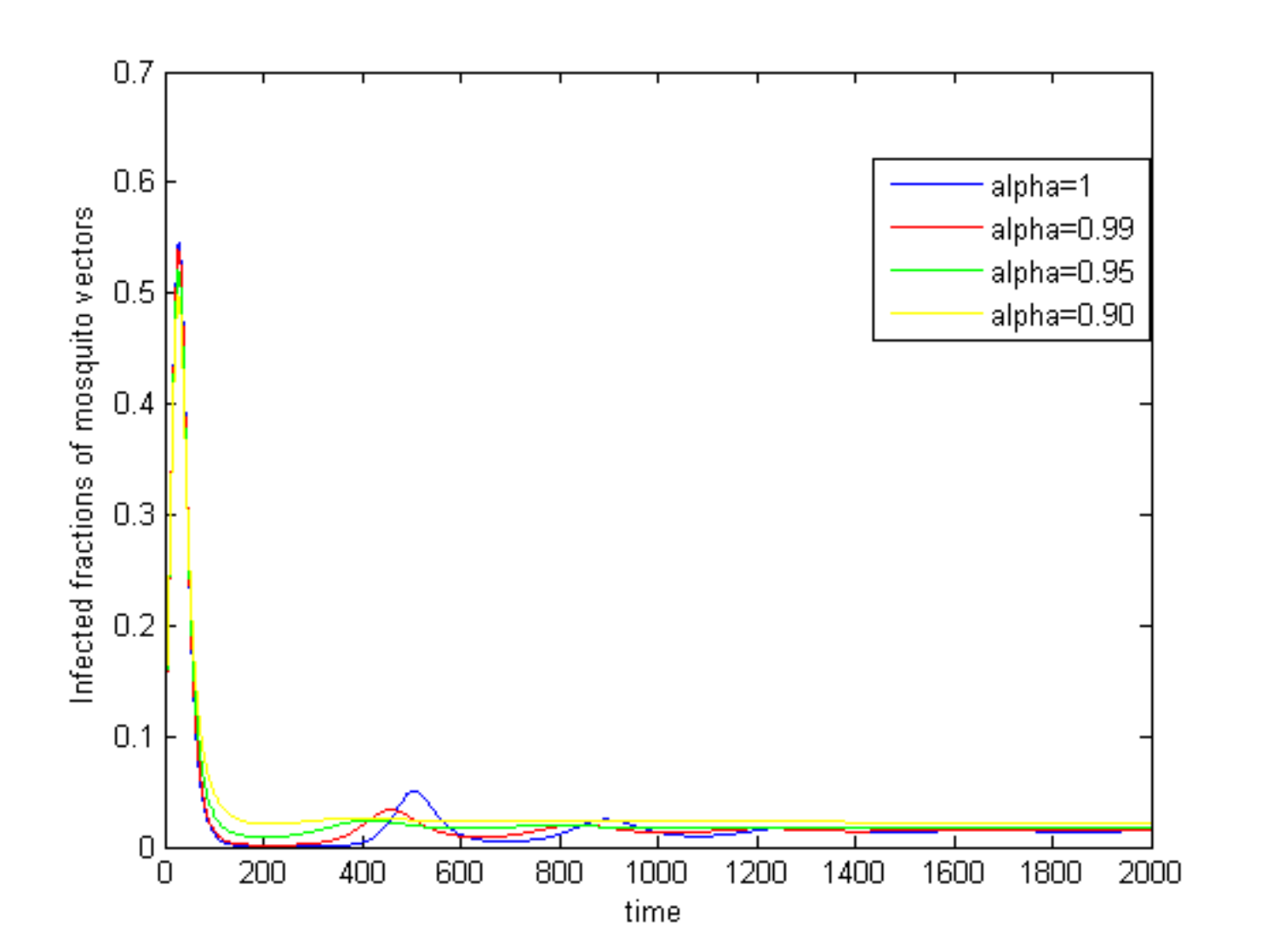}}
\caption{Solutions of fractional order model with $\alpha=1$.}
\label{fg4}
\end{figure}

\begin{figure}[h!]
\centering
\includegraphics[scale=0.8]{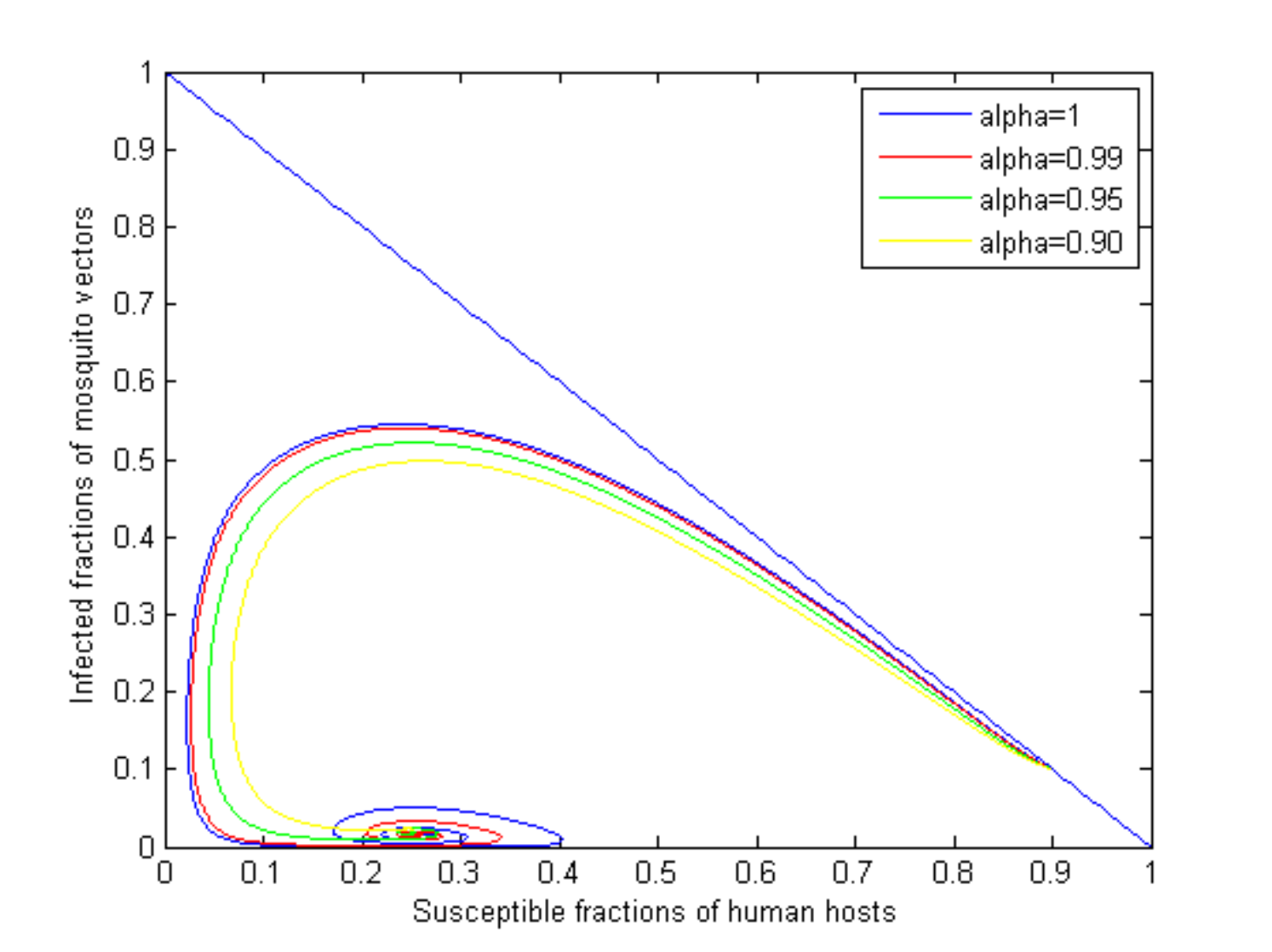}
\caption{Phase portrait for fractional order model with $\alpha=1, 0.99, 0.95, 0.90$.}
\label{fg5}
\end{figure}

\section{Conclusion}
In this research, we have proposed and studied fractional order model for the integer order model developed and studied by \cite{Tumwiine2007a, Tumwiine2007b}. We determined the basic reproduction number $R_o$ using the method and the notations by \cite{van2002}. We have investigated local asymptotic stability analysis of the model equilibria. Furthermore, we have numerically describe the trajectories of the model using Adams-type predictor-corrector method. We have extended the existing malaria model propose by \cite{Tumwiine2007a, Tumwiine2007b} to include fractional derivatives which is very interesting and important in the field of mathematical biology.


\bibliographystyle{spbasic}      

\bibliography{sample1}   

\begin{thebibliography}{65}
\providecommand{\natexlab}[1]{#1}
\providecommand{\url}[1]{\texttt{#1}}
\expandafter\ifx\csname urlstyle\endcsname\relax
  \providecommand{\doi}[1]{doi: #1}\else
  \providecommand{\doi}{doi: \begingroup \urlstyle{rm}\Url}\fi

\bibitem[Abdullahi et~al.(2013)Abdullahi, Hasan, and Abdullah]{Abdullahi2013}
M.~B. Abdullahi, Y.~A. Hasan, and F.~A. Abdullah.
\newblock A mathematical model of malaria and the effectiveness of drugs.
\newblock \emph{Applied Mathematical sciences}, 7:\penalty0 3079--3095, 2013.

\bibitem[Abu-Raddad et~al.(2006)Abu-Raddad, Patnaik, and
  j.~G.~Kublin]{Abu-Raddad2006}
L.~J. Abu-Raddad, P.~Patnaik, and j.~G.~Kublin.
\newblock Dual infection with hiv and malaria fuels the spread of both diseases
  in sub-saharan africa.
\newblock \emph{Science}, 314:\penalty0 1603--1606, 2006.

\bibitem[\'{A}guas et~al.(2012)\'{A}guas, Ferreira, and Gomes]{Aguas2012}
R.~\'{A}guas, M.~U. Ferreira, and M.~G.~M. Gomes.
\newblock Modeling the effects of relapse in the transmission dynamics of
  malaria parasites.
\newblock \emph{Journal of Parasitology Research}, 2012, Article ID
  921715:\penalty0 8, 2012.

\bibitem[Ahmad and El-Khazali(2007)]{wow20}
W.~M. Ahmad and R~El-Khazali.
\newblock Fractional-order dynamical models of love.
\newblock \emph{Chaos, Solitons and Fractals}, 33:\penalty0 1376--1375, 2007.

\bibitem[Ahmed and El-Saka(2010)]{wow14}
E.~Ahmed and H.~A. El-Saka.
\newblock On fractional order models for hepatitis c.
\newblock \emph{Nonlinear Biomedical Physics.}, 4:\penalty0 1--3, 2010.

\bibitem[Ahmed et~al.(2006)Ahmed, El-Sayed, and El-Saka]{Ahmed2006}
E.~Ahmed, A.~M.~A. El-Sayed, and H.~A.~A. El-Saka.
\newblock On some routh–hurwitz conditions for fractional order differential
  equations and their applications in lorenz, rössler, chua and chen systems.
\newblock \emph{Physics Letters A}, 358:\penalty0 1--4, 2006.

\bibitem[Ahmed et~al.(2007)Ahmed, El-Sayed, and El-Saka]{wow5}
E.~Ahmed, A.~M.~A. El-Sayed, and H.~A.~A. El-Saka.
\newblock Equilibrium points, stability and numerical solutions of
  fractional-order predator–prey and rabies models.
\newblock \emph{Journal of Mathematical Analysis and Applications},
  325:\penalty0 542-- 553, 2007.

\bibitem[Al-Sulami et~al.(2014)Al-Sulami, El-Shahed, Nieto, and
  Shammakh]{wow40}
H.~Al-Sulami, M.~El-Shahed, J.~J. Nieto, and W.~Shammakh.
\newblock On fractional order dengue epidemic model.
\newblock \emph{Mathematical Problems in Engineering}, 2014, Article ID
  456537:\penalty0 6, 2014.

\bibitem[Antia et~al.(2008)Antia, Yates, and de~Roode]{Anita2008}
R.~Antia, A.~Yates, and J.~C. de~Roode.
\newblock The dynamics of acute malaria infections. i. effect of the parasite's
  red blood cell preference.
\newblock \emph{Proceeding of the Royal Society. B}, 275:\penalty0 1449--1458,
  2008.

\bibitem[Arafa et~al.(2012)Arafa, Rida, and Khalil]{Arafa2012}
A.~A.~M. Arafa, S.~Z. Rida, and M.~Khalil.
\newblock Solution of fractional order model of childhood diseases with
  constant vaccination strategy.
\newblock \emph{Mathematical Sciences Letters}, 1:\penalty0 17--23, 2012.

\bibitem[Arafa et~al.(2014)Arafa, Rida, and Khalil]{wow15}
A.~A.~M. Arafa, S.~Z. Rida, and M.~Khalil.
\newblock A fractional-order model of hiv infection with drug therapy effect.
\newblock \emph{Journal of the Egyptian Mathematical Society.}, 22:\penalty0
  538--543, 2014.

\bibitem[Area et~al.(2015)Area, Batarfi, Losada, Nieto, Shammakh, and
  Torres]{wow19}
I.~Area, H.~Batarfi, J.~Losada, J.~J. Nieto, W.~Shammakh, and Á. Torres.
\newblock On a fractional order ebola epidemic model.
\newblock \emph{Advances in Difference Equations 2015:278}, 2015.

\bibitem[Barley et~al.(2012)Barley, Murillo, Roudenko, Tameru, and
  Tatum]{Barley2012}
K.~Barley, D.~Murillo, S.~Roudenko, A.~M. Tameru, and S.~Tatum.
\newblock A mathematical model of hiv and malaria co-infection in sub-saharan
  africa.
\newblock \emph{Journal of AIDS Clinical Research}, 3:173. doi:
  10.4172/2155-6113.1000173, 2012.

\bibitem[Chitnis(2005)]{Chitnis2005}
N.~Chitnis.
\newblock \emph{Using mathematical models in controlling the spread of
  malaria}.
\newblock PhD thesis, University of Arizona, Tucson, Arizona, USA, 2005.

\bibitem[Chitnis et~al.(2006)Chitnis, Cushing, and Hyman]{Chitnis2006}
N.~Chitnis, J.~M. Cushing, and J.~M. Hyman.
\newblock Bifurcation analysis of a mathematical model for malaria
  transmission.
\newblock \emph{SIAM J. Appl. Math}, 67:\penalty0 24–45, 2006.

\bibitem[Chiyaka et~al.(2008)Chiyaka, Tchuenche, Garira, and Dube]{Chiyaka2008}
C.~Chiyaka, J.~M. Tchuenche, W.~Garira, and S.~Dube.
\newblock A mathematical analysis of the effects of control strategies on the
  transmission dynamics of malaria.
\newblock \emph{Applied Mathematics and Computation}, 195:\penalty0 1641--662,
  2008.

\bibitem[Cho et~al.(2015)Cho, Kim, and Sheen]{sheen2015}
Y.~Cho, I.~Kim, and D.~Sheen.
\newblock Afractional-order model for minmod millennium.
\newblock \emph{Mathematical Biosciences}, 262:\penalty0 36--45, 2015.

\bibitem[Diethelm et~al.(2002)Diethelm, Ford, and Freed]{wow3}
K.~Diethelm, N.~J. Ford, and A.~D. Freed.
\newblock A predictor-corrector approach for the numerical solution of
  fractional differential equations.
\newblock \emph{Nonlinear Dyamics}, 29:\penalty0 3--22, 2002.

\bibitem[Diethelm et~al.(2004)Diethelm, Ford, and Freed]{wow4}
K.~Diethelm, N.~J. Ford, and A.~D. Freed.
\newblock Detailed error analysis for a fractional adams method.
\newblock \emph{Nonlinear Dyamics}, 36:\penalty0 31--52, 2004.

\bibitem[Ding and Ye(2009)]{wow12}
Y.~Ding and H.~Ye.
\newblock A fractional-order differential equation model of hiv infection of
  cd4+ t-cells.
\newblock \emph{Mathematical and Computer Modelling}, 50:\penalty0 386--392,
  2009.

\bibitem[Ding et~al.(2012)Ding, Wang, and Ye]{wow13}
Y.~Ding, Z.~Wang, and H.~Ye.
\newblock Optimal control of a fractional-order hiv-immune system with memory.
\newblock \emph{IEEE Trans. Contr. Sys. Techn.}, 20:\penalty0 763--769, 2012.

\bibitem[Driessche and Watmough(2002)]{van2002}
P.~V.~D. Driessche and J.~Watmough.
\newblock Reproduction numbers and sub-threshold endemic equilibria for
  compartmental models of disease transmission.
\newblock \emph{Mathematical Biosciences}, 180:\penalty0 29--48, 2002.

\bibitem[El-Saka(2015)]{wow16}
H.~A.~A. El-Saka.
\newblock Backward bifurcations in fractional-order vaccination models.
\newblock \emph{Journal of the Egyptian Mathematical Society.}, 23:\penalty0
  49--55, 2015.

\bibitem[El-Sayed et~al.(2007)El-Sayed, El-Mesiry, and
  El-Saka]{Ahmed2007logistic}
A.~M.~A. El-Sayed, A.~E.~M. El-Mesiry, and H.~A.~A. El-Saka.
\newblock On the fractional-order logistic equation.
\newblock \emph{Applied Mathematics Letters 20}, 20:\penalty0 817–823, 2007.

\bibitem[El-Shahed and Alsaedi(2011)]{El-Shahed2011}
M.~El-Shahed and A.~Alsaedi.
\newblock The fractional sirc model and influenza a.
\newblock \emph{Mathematical Problems in Engineering}, 2011, Article ID
  480378:\penalty0 9, 2011.

\bibitem[Gao et~al.(2014)Gao, Lou, and Ruan]{Gao2014}
D.~Gao, Y.~Lou, and S.~Ruan.
\newblock A periodic {Ross-Macdonald} model in a patchy environment.
\newblock \emph{Discrete Continuous Dyn Syst Ser B}, 19:\penalty0 3133–3145,
  2014.

\bibitem[Ghosh et~al.(2013)Ghosh, Lashari, and Li]{Gosh2013}
M.~Ghosh, A.~A. Lashari, and Xue-Zhi Li.
\newblock Biological control of malaria: A mathematical model.
\newblock \emph{Applied Mathematics and Computation}, 219:\penalty0 923--7939,
  2013.

\bibitem[Gokdogan et~al.(2011)Gokdogan, Yildirim, and Merdan]{wow25}
A.~Gokdogan, A.~Yildirim, and M.~Merdan.
\newblock Solving a fractional order model of hiv infection of cd4+ t cells.
\newblock \emph{Mathematical Problems in Engineering}, 54:\penalty0 2132--2138,
  2011.

\bibitem[Gonz\'{a}lez-Parra et~al.(2014)Gonz\'{a}lez-Parra, Arenas, and
  Chen-Charpentier]{Gonzalez2014}
G.~Gonz\'{a}lez-Parra, A.~J. Arenas, and B.~Chen-Charpentier.
\newblock {A fractional order epidemic model for the simulation of outbreaks of
  influenza A(H1N1)}.
\newblock \emph{Mathematical Methods in the Applied Sciences}, 37:\penalty0
  2218--2226, 2014.

\bibitem[Hoshen and Morse(2004)]{Moshe2004}
M.~B. Hoshen and A.~P. Morse.
\newblock Spread of anti-malarial drug resistance: Mathematical model with
  implications for act drug policies.
\newblock \emph{Malaria Journal}, 3:32, 2004.

\bibitem[Javidi and Ahmad(2014)]{Javidi2014}
M.~Javidi and B.~Ahmad.
\newblock A study of fractional-order cholera model.
\newblock \emph{Appl.Math.Inf.Sci}, 8:\penalty0 2195--2206, 2014.

\bibitem[Keegan and Dushoff(2013)]{Keegan2013}
L.~T. Keegan and J.~Dushoff.
\newblock Population-level effects of clinical immunity to malaria.
\newblock \emph{BMC Infectious Diseases 2013}, 13:428, 2013.

\bibitem[Koella and Anita(2003)]{Koella2003}
J.~C. Koella and R.~Anita.
\newblock Epidemiological models for the spread of anti-malarial resistance.
\newblock \emph{Malaria Journal}, 2:3, 2003.

\bibitem[Lawi et~al.(2011)Lawi, Mugisha, and Omolo-Ongati]{lawi2011}
O.~Lawi, J.~Y.~T. Mugisha, and N.~Omolo-Ongati.
\newblock Mathematical model for malaria and meningitisco-infection among
  children.
\newblock \emph{Applied Mathematical Sciences}, 5:\penalty0 2337--2359, 2011.

\bibitem[Li and Tao(2009)]{Tao2009}
C.~Li and C.~Tao.
\newblock On the fractional adams method.
\newblock \emph{Computers and Mathematics with Applications}, 58:\penalty0
  1573--1588, 2009.

\bibitem[Lin(2007)]{wow10}
W.~Lin.
\newblock Global existence theory and chaos control of fractional differential
  equations.
\newblock \emph{Journal of Mathematical Analysis and Applications},
  332:\penalty0 709--726, 2007.

\bibitem[Liu and Chen(2015)]{wow21}
W.~Liu and W.~Chen.
\newblock Chaotic behavior in a new fractional-order love triangle system with
  competition.
\newblock \emph{Journal of Applied Analysis and Computation}, 5:\penalty0
  103--113, 2015.

\bibitem[Liu et~al.(2014)Liu, Lu, and Szanton]{wow22}
Y.~Liu, P.~Lu, and I.~Szanton.
\newblock Numerical analysis for a fractional differential time-delay model of
  hiv infection of cd4+ t-cell proliferation under antiretroviral therapy.
\newblock \emph{Abstract and Applied Analysis}, 2014, Article ID
  291614:\penalty0 13, 2014.

\bibitem[Liu and Lu(2014)]{Liu2014}
Z.~Liu and P.~Lu.
\newblock Stability analysis for hiv infection of cd4+ t-cells by a fractional
  differential time-delay model with cure rate.
\newblock \emph{Advances in Difference Equations 2014}, 298, 2014.

\bibitem[Mandal et~al.(2011)Mandal, Sarkar, and Sinha]{Mandal2011}
S.~Mandal, R.~R. Sarkar, and S.~Sinha.
\newblock Mathematical models of malaria - a review.
\newblock \emph{Malaria Journal}, 10:202, 2011.

\bibitem[Matignon(1996)]{wow6}
D.~Matignon.
\newblock Stability results for fractional differential equations with
  applications to control processing.
\newblock \emph{Computational Engineering in Systems Applications}, 2:\penalty0
  963--968, 1996.

\bibitem[Mukandavire et~al.(2009)Mukandavire, Gumel, Garira, and
  Tchuenche]{Mukandavire2009}
Z.~Mukandavire, A.~B. Gumel, W.~Garira, and J.~M. Tchuenche.
\newblock Mathematical analysis of a model for hiv-malaria co-infection.
\newblock \emph{Math Biosci Eng}, 6:\penalty0 333--362, 2009.

\bibitem[Ngonghala et~al.(2012)Ngonghala, Ngwa, and
  Teboh-Ewungkem]{Ngonghala2012}
C.~N. Ngonghala, G.~A. Ngwa, and M.~I. Teboh-Ewungkem.
\newblock Periodic oscillations and backward bifurcation in a model for the
  dynamics of malaria transmission.
\newblock \emph{Mathematical Biosciences}, 240:\penalty0 45--62, 2012.

\bibitem[Ngwa and Shu(2000)]{shu2000}
G.~A. Ngwa and W.~S. Shu.
\newblock A mathematical model for endemic malaria with variable human and
  mosquito populations.
\newblock \emph{Mathematical and Computer Modelling}, 32:\penalty0 747--763,
  2000.

\bibitem[Odibat and Shawagfeh(2007)]{wow8}
Z.~M. Odibat and N.~T. Shawagfeh.
\newblock Generalized taylor’s formula.
\newblock \emph{Applied Mathematics and Computation}, 186:\penalty0 286--293,
  2007.

\bibitem[Okosun and Makinde(2011)]{Okosun2011}
K.~O. Okosun and O.~D. Makinde.
\newblock Modelling the impact of drug resistance in malaria transmission and
  its optimal control analysis.
\newblock \emph{International Journal of the Physical Sciences}, 6:\penalty0
  6479--6487, 2011.

\bibitem[Okosun and Makinde(2013)]{Okosun2013}
K.~O. Okosun and O.~D. Makinde.
\newblock Optimal control analysis of malaria in the presence of non-linear
  incidence rate.
\newblock \emph{Appl. Comput. Math.}, 12:\penalty0 20--32, 2013.

\bibitem[Okosun and Makinde(2014)]{makinde2014}
K.~O. Okosun and O.~D. Makinde.
\newblock A co-infection model of malaria and cholera diseases with optimal
  control.
\newblock \emph{Mathematical Biosciences}, 258:\penalty0 19--32, 2014.

\bibitem[Okyere et~al.(2016)Okyere, Oduro, Amponsah, Dontwi, and
  Frempong]{Okyere2016a}
E.~Okyere, F.~T. Oduro, S.~K. Amponsah, I.~K. Dontwi, and N.~K. Frempong.
\newblock Fractional order sir model with constant population.
\newblock \emph{British Journal of Mathematics and Computer Science},
  14:\penalty0 1--12, 2016.

\bibitem[Ozalp and Demirci(2011)]{OZLAP2011}
N.~Ozalp and E.~Demirci.
\newblock A fractional order seir model with vertical transmission.
\newblock \emph{Mathematical and Computer Modelling}, 54:\penalty0 1--6, 2011.

\bibitem[Petras(2011)]{wow2}
I.~Petras.
\newblock \emph{Fractional-Order Nonlinear Systems: Modeling, Analysis and
  Simulation}.
\newblock Springer, 2011.

\bibitem[Pinto and Carvalho(2014)]{Pinto2014}
C.~M.~A. Pinto and A.~R.~M. Carvalho.
\newblock New findings on the dynamics of hiv and tb coinfection modelsl.
\newblock \emph{Applied mathematics and computation}, 242:\penalty0 36--46,
  2014.

\bibitem[Pinto and Machado(2013)]{wow17}
C.~M.~A. Pinto and J.~A.~T. Machado.
\newblock Fractional model for malaria transmission under control strategies.
\newblock \emph{Computers and Mathematics with Applications.}, 66:\penalty0
  908--916, 2013.

\bibitem[Podlubny(1999)]{wow}
I.~Podlubny.
\newblock \emph{Fractional Differential Equations}.
\newblock Academic Press, New York, 1999.

\bibitem[Pongtavornpinyo et~al.(2008)Pongtavornpinyo, Yeung, Hastings, Dondorp,
  Day, and White]{Dondrop2008}
W.~Pongtavornpinyo, S.~Yeung, I.~M. Hastings, A.~M. Dondorp, N.~P.~J. Day, and
  N.~J. White.
\newblock Spread of anti-malarial drug resistance: Mathematical model with
  implications for act drug policies.
\newblock \emph{Malaria Journal}, 7 :229, 2008.

\bibitem[Rihan et~al.(2014)Rihan, Baleanu, Lakshmanan, and Rakkiyappan]{wow23}
F.~A. Rihan, D.~Baleanu, S.~Lakshmanan, and R.~Rakkiyappan.
\newblock On fractional sirc model with salmonella bacterial infection.
\newblock \emph{Abstract and Applied Analysis}, 2014, Article ID
  136263:\penalty0 9, 2014.

\bibitem[Ross(1911)]{Ross1911}
R.~Ross.
\newblock The prevention of malaria.
\newblock \emph{John Murray, London}, 1911.

\bibitem[Sardar et~al.(2015)Sardar, Rana, Bhattacharya, Al-Khaled, and
  Chattopadhyay]{wow7}
T.~Sardar, S.~Rana, S.~Bhattacharya, K.~Al-Khaled, and J.~Chattopadhyay.
\newblock A generic model for a single strain mosquito-transmitted disease with
  memory on the host and the vector.
\newblock \emph{Mathematical biosciences}, 263:\penalty0 18--38, 2015.

\bibitem[Tumwiine et~al.(2007{\natexlab{a}})Tumwiine, Mugisha, and
  Luboobi]{Tumwiine2007a}
J.~Tumwiine, J.~Y.~T. Mugisha, and L.~S. Luboobi.
\newblock On oscillatory pattern of malaria dynamics in a population with
  temporary immunity.
\newblock \emph{Computational and Mathematical Methods in Medicine},
  8:\penalty0 191--203, 2007{\natexlab{a}}.

\bibitem[Tumwiine et~al.(2007{\natexlab{b}})Tumwiine, Mugisha, and
  Luboobi]{Tumwiine2007b}
J.~Tumwiine, J.~Y.~T. Mugisha, and L.~S. Luboobi.
\newblock A mathematical model for the dynamics of malaria in a human host and
  mosquito vector with temporary immunity.
\newblock \emph{Applied Mathematics and Computation}, 189:\penalty0 1953--1965,
  2007{\natexlab{b}}.

\bibitem[Tumwiine et~al.(2010)Tumwiine, Mugisha, and Luboobi]{Tumwiine2010}
J.~Tumwiine, J.~Y.~T. Mugisha, and L.~S. Luboobi.
\newblock A host-vector model for malaria with infective immigrants.
\newblock \emph{J. Math. Anal. Appl}, 361:\penalty0 139--149, 2010.

\bibitem[WHO(2005)]{WHO2005}
WHO.
\newblock World malaria report 2005.
\newblock Technical Report, World Health Organization/UNICEF, Geneva,
  Switzerland., 2005.

\bibitem[WHO(2012)]{WHO2012}
WHO.
\newblock World malaria report 2012.
\newblock Technical Report, World Health Organization/UNICEF, Geneva,
  Switzerland., 2012.

\bibitem[WHO(2014)]{WHO2014}
WHO.
\newblock World malaria report 2014.
\newblock Technical Report, World Health Organization/UNICEF, Geneva,
  Switzerland., 2014.

\bibitem[Ye and Ding(2009)]{wow24}
H.~Ye and Y.~Ding.
\newblock Nonlinear dynamics and chaos in a fractional-order hiv model.
\newblock \emph{Mathematical Problems in Engineering}, 2009, Article ID
  378614:\penalty0 9, 2009.

\end{thebibliography}

\end{document}